\newtheorem{thm}{Theorem}
\newtheorem{cor}[thm]{Corollary}
\newtheorem{theorem}[thm]{Theorem}
\newtheorem{lemma}[thm]{Lemma}
\newtheorem{proposition}[thm]{Proposition}
\theoremstyle{definition}
\newtheorem*{definition*}{Definition}
\newtheorem{remark}[thm]{Remark}
\newcommand{\CPb}{\overline{\mathbb{CP}}{}^{2}}
\newcommand{\CP}{{\mathbb{CP}}{}^{2}}
\newcommand{\CPo}{{\mathbb{CP}}{}^{1}}
\newcommand{\R}{\mathbb{R}}
\newcommand{\Z}{\mathbb{Z}}
\newcommand{\K}{{\rm K3}}
\def \x {\times}
\def \eu{{\rm{e}}}
 \def\R{{\mathbb{R}}}
 \def\Z{{\mathbb{Z}}}
\begin{document}

\title[Small Lefschetz fibrations and exotic $4$-manifolds]
{Small Lefschetz fibrations and exotic $4$-manifolds}

\author[R. \.{I}. Baykur]{R. \.{I}nan\c{c} Baykur}
\address{Department of Mathematics and Statistics, University of Massachusetts, Amherst, MA 01003-9305, USA}
\email{baykur@math.umass.edu}

\author[M. Korkmaz]{Mustafa Korkmaz}
\address{Department of Mathematics, Middle East Technical University, 06800 Ankara, Turkey}
\email{korkmaz@metu.edu.tr}

\begin{abstract}
We explicitly construct genus-$2$ Lefschetz fibrations  whose total spaces are minimal symplectic $4$-manifolds homeomorphic to complex rational surfaces $\CP \# p\, \CPb$ for  $p=7, 8, 9$, and to $3 \CP \#q\, \CPb$ for  $q =12, \ldots, 19$. \linebreak Complementarily, we prove that there are no minimal genus-$2$ Lefschetz \linebreak fibrations whose total spaces are homeomorphic to any other simply-connected $4$-manifold with $b^+ \leq 3$, with one possible exception when $b^+=3$. Meanwhile, we produce positive Dehn twist factorizations for several new genus-$2$ Lefschetz fibrations with small number of critical points, including the smallest possible example, which follow from a reverse engineering procedure we introduce for this setting. We also derive exotic minimal symplectic $4$-manifolds in the homeomorphism classes of $\CP \# 4 \CPb$ and $3 \CP \# 6 \CPb$ from small Lefschetz fibrations over surfaces of higher genera.
\end{abstract}

% Complementarily, we prove that there are no minimal genus-$2$ Lefschetz fibrations whose total spaces are homeomorphic to any other blow-ups of $\CP$ or $3\CP$, with one possible exception for the latter. 

\maketitle

\setcounter{secnumdepth}{2}
\setcounter{section}{0}

% ==========================================================================================================
\section{Introduction} 
% ==========================================================================================================
% ==========================================================================================================

Existence of minimal symplectic structures on $4$-manifolds is a fundamental question in smooth $4$-manifold topology. There has been much interest especially in producing minimal symplectic $4$-manifolds in the homeomorphism classes of standard simply-connected $4$-manifolds with small second homology, such as blow-ups of $\CP$ or $3  \,\CP$. Over the last two decades, many ingenious construction techniques, such as rational blowdowns, generalized fiber sum, knot surgery and Luttinger surgery, have been introduced to effectively address this problem by Fintushel and Stern, Gompf, Jongil Park, and several others.  (e.g.~\cite{ABP, AP, BK, Donaldson87, FSrationalblowdown, FSKnotsurgery, FSPinwheels, FSReverseEngineering, FriedmanMorgan, Gompf, Kotschick, ParkD, ParkJ, StipsiczSzabo}.) Resting on spectacular results of Taubes on Seiberg-Witten invariants of symplectic $4$-manifolds, these constructions have demonstrated that minimal symplectic $4$-manifolds not only constitute \emph{small exotic $4$-manifolds} (which are homeomorphic but not diffeomorphic to standard ones), but also resource them in almost all the known instances. 

On the other hand, since the pioneering works of Donaldson and Gompf in late $1990$s, it is known that symplectic $4$-manifolds, up to blow-ups, correspond to Lefschetz fibrations, which can be studied combinatorially as \emph{positive factorizations}, i.e. factorizations of  boundary Dehn twists into positive Dehn twists in the mapping class groups of compact oriented surfaces. This brings a priori an understanding that \emph{any} symplectic $4$-manifold, if it exists, should come from a positive factorization. In practice, however, the very idea of constructing small exotic $4$-manifolds via new positive factorizations has hardly been utilized up to date. 

The main objective of this article is to demonstrate ways to successfully implement this latter approach by constructing positive factorization for Lefschetz fibrations with small number of critical points, which we call \emph{small Lefschetz fibrations} in analogy (Section~\ref{smallLFs}), as they correspond to $4$-manifolds with small second homology.
By the virtue of our constructions, these will provide simple descriptions of many new small exotic $4$-manifolds. 

Lefschetz fibrations of fiber genus $0$ and $1$ are well-known: their total spaces are diffeomorphic to complex surfaces $\CP$, $\CPo\x\CPo$, elliptic surfaces $E(n)$, and their blow-ups \cite{Kas, Moishezon}. Therefore, for a Lefschetz fibration to bear any new symplectic $4$-manifold, the fiber genus should be at least $2$. By a thorough analysis of small genus-$2$ Lefschetz fibrations, we will show that one can already achieve a lot with genus-$2$ fibrations.

\begin{theorem} \label{mainthm}
There exist decomposable genus-$2$ Lefschetz fibrations, whose total spaces are minimal symplectic $4$-manifolds homeomorphic but not diffeomorphic to $\CP \# \,p\, \CPb$ for $p=7,8,9$, and to $3\, \CP \#\, q\, \CPb$ for $q= 12, \ldots, 19$. 
\end{theorem}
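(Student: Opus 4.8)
The plan is to realize each of the eleven target $4$-manifolds as the total space of an explicit genus-$2$ Lefschetz fibration, by writing down a positive Dehn twist factorization for it and then reading off every relevant smooth invariant directly from the word. The main obstacle, and the creative core, is the \emph{construction} of these factorizations. For each $p\in\{7,8,9\}$ and each $q\in\{12,\dots,19\}$ I would use the reverse engineering procedure of Section~\ref{smallLFs} to produce a factorization $W=t_{c_1}t_{c_2}\cdots t_{c_n}$ of the identity in $\mod(\Sigma_2)$ whose associated (decomposable) fibration $f_W\colon X_W\to S^2$ has prescribed numbers $n_0$ of nonseparating and $s$ of separating vanishing cycles. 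Beginning from a standard hyperelliptic/chain relation, one repeatedly substitutes sub-words using the lantern and chain relations and applies Hurwitz moves and conjugations to trade critical points; the delicate point is to arrange \emph{simultaneously} that the word remains positive, that $(n_0,s)$ lands on the values forced by the target, and that the vanishing cycles normally generate $\pi_1(\Sigma_2)$.

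Granting the factorizations, the invariant computations are routine. For a genus-$2$ fibration over $S^2$ one has $e(X_W)=n-4$ with $n=n_0+s$, while the hyperelliptic signature formula gives $\sigma(X_W)=-\tfrac{3}{5}\,n_0-\tfrac{1}{5}\,s$ (this applies to every genus-$2$ fibration, since the hyperelliptic involution is central in $\mod(\Sigma_2)$). I would check case by case that these match $e=3+p$, $\sigma=1-p$ for $\CP\#p\,\CPb$, and $e=5+q$, $\sigma=3-q$ for $3\,\CP\#q\,\CPb$. Simple connectivity is then verified algebraically: $\pi_1(X_W)$ is the quotient of $\pi_1(\Sigma_2)=\langle a_1,b_1,a_2,b_2 \rangle$ by the normal closure of the homotopy classes of the vanishing cycles, and I would exhibit relators that collapse this group to the trivial one.

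Since $X_W$ is simply connected and symplectic, its Kirby--Siebenmann invariant vanishes, so by Freedman's theorem its homeomorphism type is determined by its intersection form once I show that form is odd and indefinite. The pair $(b^+,b^-)$ read off from $(e,\sigma)$ already fixes the rank and signature and gives indefiniteness; oddness is immediate from Rokhlin's theorem whenever $\sigma\not\equiv 0\pmod{16}$, and in the few remaining cases (such as $3\,\CP\#19\,\CPb$, where $\sigma=-16$) I would instead exhibit an odd class directly from the handle decomposition determined by the vanishing cycles, ruling out the even form of the same rank and signature. This identifies $X_W$ with $\CP\#p\,\CPb$ or $3\,\CP\#q\,\CPb$ up to homeomorphism.

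Finally, minimality follows from Usher's theorem: each $f_W$ is relatively minimal of fiber genus $2\ge 2$, so $X_W$ is minimal once the short list of exceptional fiber-sum forms is excluded, which the construction ensures. Exoticness splits according to $b^+$. When $b^+=3$, Taubes' theorem gives $\SW_{X_W}(\pm K)\ne 0$ because $X_W$ is minimal symplectic with $b^+>1$; but $3\,\CP\#q\,\CPb=\CP\#(2\,\CP\#q\,\CPb)$ decomposes as a connected sum of pieces each with $b^+>0$, whence its Seiberg--Witten invariants vanish, and the two cannot be diffeomorphic. When $b^+=1$, where no Seiberg--Witten obstruction is available, I would instead show that $X_W$ has nonnegative symplectic Kodaira dimension---it is minimal with $c_1^2=2e+3\sigma\ge 0$ and, having fiber genus $2$, satisfies $K_{X_W}\cdot[\omega]>0$, so it is neither rational nor ruled---whereas $\CP\#p\,\CPb$ is rational with Kodaira dimension $-\infty$. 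As Kodaira dimension is a diffeomorphism invariant, $X_W$ is exotic. The genuinely hard step remains the first: producing positive factorizations that hit all eleven target lattices while keeping the fundamental group trivial.
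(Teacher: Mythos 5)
Your verification scheme (Euler characteristic and the hyperelliptic signature formula, Freedman via oddness and indefiniteness, minimality via Usher, exoticness via Seiberg--Witten vanishing for $b^+=3$ and via minimality or Kodaira dimension for $b^+=1$) matches the paper's almost step for step, and each of those steps is sound. But the proposal has a genuine gap exactly where you flag it: you never produce the eleven positive factorizations, and the route you sketch for producing them --- starting from a single hyperelliptic or chain relation and iterating lantern and chain substitutions while somehow also killing $\pi_1(\Sigma_2)$ --- is not the one that works. Lantern substitutions in genus $2$ only reach a handful of the required types (essentially the Endo--Gurtas and Akhmedov--Park examples the paper cites), and a single reverse-engineered word of small type typically has non-simply-connected total space: the paper shows that the very small factorizations one obtains this way, of types $(4,3)$, $(6,2)$, $(18,1)$, $(20,0)$, all have total spaces diffeomorphic to rational or ruled surfaces such as $(S^2\times T^2)\,\#\,3\CPb$, so they cannot themselves be the desired examples.

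The missing idea is the paper's second step: having reverse engineered a few small building blocks $W_1,\dots,W_5$ in $\Gamma_2^1$ (each a positive factorization of $t_\delta$), one forms products of \emph{conjugates}, $\widetilde W=(W')^{\phi}W''$, which are again positive factorizations (of $t_\delta^2$) because boundary twists are central. This does three things at once: it exhibits the fibration as a nontrivial fiber sum, so Usher's theorem (Proposition~\ref{minimalityprop}) gives minimality with no further case analysis; it makes the type $(n,s)$ additive, so all eleven lattice points are hit by combining just five building blocks; and, crucially, the conjugation $\phi$ is a free parameter that can be tuned so that the abelianized vanishing-cycle relations coming from $(W')^{\phi}$ together with those from $W''$ kill $H_1(\Sigma_2)$ --- the paper checks via the Picard--Lefschetz action on homology that the single multitwist $\phi=t_{c_1}^{-1}t_{c_4}$ works in every case. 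Without this conjugation mechanism (or a substitute for it) your plan stalls at the step you yourself identify as the hard one, so as written the proposal is an outline rather than a proof. Two smaller points: minimality does not follow from relative minimality plus fiber genus $\geq 2$ alone (there are relatively minimal, non-minimal genus-$2$ fibrations), so the fiber-sum decomposition is genuinely needed; and for oddness of the intersection form the separating vanishing cycles hand you an embedded torus of square $-1$ in every case, which is cleaner than splitting into Rokhlin and non-Rokhlin cases.
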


We will describe all the genus-$2$ Lefschetz fibrations in Theorem~\ref{mainthm} explicitly via positive factorizations in the mapping class group of a genus-$2$ surface with one boundary component (Section~\ref{sec:exotic}; Theorems~\ref{exoticCP} and \ref{exotic3CP}). These positive factorizations contain at least two proper (even smaller!) positive factorizations, which amounts to them decomposing as nontrivial fiber sums of Lefschetz fibrations. Based on \cite{Usher, BaykurPAMS}, this allows us to conclude that the corresponding Lefschetz fibrations are \emph{minimal} in the stronger sense: they are not only relatively minimal, i.e. not containing any exceptional spheres in the fibers, but also they do not contain any exceptional spheres at all (Proposition~\ref{minimalityprop}). We construct the exotic genus-$2$ Lefschetz fibrations of Theorem~\ref{mainthm} in two main steps. In Section~\ref{smallLFs}, we will describe a general procedure to produce small positive factorizations in the mapping class group, which we will call \emph{reverse engineering}, motivated by a similar procedure defined by Fintushel and Stern under the same name to produce small exotic $4$-manifolds \cite{FSReverseEngineering}. We employ this technique to obtain several (very) small positive factorizations, including the smallest possible positive factorization for a nontrivial genus-$2$ Lefschetz fibration (Theorem~\ref{thesmallest}). As we will observe (Proposition~\ref{difftypes}), these very small positive factorizations themselves are not exotic; they always have total spaces \emph{diffeomorphic} to complex rational and ruled surfaces. Then the second step of the construction will be to take products of certain conjugates of these positive factorizations to arrive at \emph{simply-connected} exotic Lefschetz fibrations of Theorem~\ref{mainthm}. As quick byproducts of our two step construction, we will also show that there are --mostly nonholomorphic-- genus-$2$ Lefschetz fibrations with $b^+=1$ and $c_1^2=0,1,2$, with any fundamental group $(\Z / \, m_1 \Z) \, \oplus (\Z / \, m_2 \Z)$ (Remark~\ref{noncomplex}), and also that there exist an infinite family of pairwise non-diffeomorphic minimal symplectic $4$-manifolds in the homeomorphism classes of $3\, \CP \#\, q\, \CPb$ for $q=12, \ldots, 19$ (Corollary~\ref{infinitecor}).

%There are no minimal genus-$2$ Lefschetz fibrations in any other homeomorphism classes of simply-connected $4$-manifolds with $b^+ \leq 3$, possibly with the exception of $3\CP \# 11 \CPb $. 

\smallskip
Next is a complementary result:  

\enlargethispage{0.15in}
\begin{theorem}\label{scgeography}
Any \emph{simply-connected} minimal genus-$2$ Lefschetz fibration $(X,f)$ with $b^+(X) \leq 3$ is homeomorphic to $\CP \# \, p \, \CPb$ for some $7 \leq p \leq 9$ or to $3 \, \CP \# \, q \, \CPb$ for $11 \leq q \leq  19$.
\end{theorem}

This theorem will follow from our study of the \emph{geography} of small minimal genus-$2$ Lefschetz fibrations in Section~\ref{geography}. There we  analyze which pairs $(n,s)$, where $n$ and $s$ are the number of Dehn twists along nonseparating and separating curves in a positive factorization on a genus-$2$ surface, can possibly exist. We will use a combination of obstructions coming from the algebraic topology and the Seiberg-Witten invariants of genus-$2$ Lefschetz fibrations (Lemmas~\ref{constraints} and~\ref{atmost30}) to determine the list of homeomorphism classes that can possibly support any simply-connected genus-$2$ Lefschetz fibration with $b^+ \leq 3$ (Theorem~\ref{scgeography}). The only possible case we have not been able to realize by a minimal simply-connected genus-$2$ Lefschetz fibration so far is when  $(n,s)=(10,10)$ and the total space is homeomorphic to  $3\CP \# 11 \CPb $. This is the exception noted in the theorem, which we are inclined to believe exists. 

%It is also worth mentioning that even though we see no essential reason for every exotic simply-connected genus-$2$ Lefschetz fibrations to be fiber sum decomposable like the ones we construct, it is possible to see that they always have to be minimal. Because any nonminimal simply-connected genus-$2$ Lefschetz fibration whose total space is not a rational surface is of Kodaira dimension 0 or 1. 

We would like to remark that minimal, decomposable genus-$2$ Lefschetz fibrations in the homeomorphism class of $\CP \# 9 \CPb$ are already present in the literature: 
as shown by Fintushel and Stern, knot surgery on the elliptic surface $E(1) \cong \CP \# \, 9 \CPb$ with a genus-$1$ fibered knot yields a genus-$2$ Lefschetz fibration, and this fibration  decomposes into two smaller fibrations on complex ruled surfaces \cite{FSLF}, which has been seen as a very special example up until now. Also preceding our results are a few other examples of minimal simply-connected genus-$2$ Lefschetz fibrations for larger $p$ and $q$, which have been obtained through lantern substitutions corresponding to rational blowdowns: for  again $\CP \# 9 \CPb$ by Endo and Gurtas \cite{EndoGurtas} and  for $3 \CP \# q \, \CPb$ with $15 \leq q \leq 19$ by Akhmedov and Park \cite{AkhmedovParkJY, ParkJY}.

%\vspace{0.1in}
In Section~\ref{evensmaller}, we will show that Luttinger surgeries along fibered Lagrangian tori in small genus-$2$ Lefschetz fibrations over surfaces of positive genera can be used to produce minimal symplectic $4$-manifolds in even smaller homeomorphism classes (Theorem~\ref{thm:evensmaller}). Per Theorem~\ref{scgeography}, this can only be achieved by dropping the fibered aspect, and thus these examples always involve a couple of Luttinger surgeries that destroy the fibration structure. We will prove:

\begin{theorem} \label{mainthm3}
There exist decomposable minimal genus-$2$ Lefschetz fibrations over $T^2$ and $\Sigma_2$ which are equivalent via Luttinger surgeries to  minimal symplectic \linebreak $4$-manifolds homeomorphic to $\CP \# 4 \CPb$ and $3 \CP \# 6 \CPb$, respectively.
\end{theorem}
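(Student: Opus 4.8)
The plan is to realize each target homeomorphism type as the output of Luttinger surgeries on the total space of a small genus-$2$ Lefschetz fibration over a positive-genus base, built explicitly from a positive factorization. For a genus-$2$ Lefschetz fibration over $\Sigma_h$ I need a relation
\[
\prod_{j=1}^{h}[a_j,b_j] = t_{c_1}\cdots t_{c_{n+s}}
\]
in the mapping class group of the closed genus-$2$ surface, where $n$ of the vanishing cycles $c_i$ are nonseparating and $s$ are separating. I would produce such relations for $h=1$ and $h=2$ by feeding the very short positive words from the reverse-engineering procedure (Section~\ref{smallLFs}, Theorem~\ref{thesmallest}) into the right-hand side while absorbing the remaining monodromy into the commutator(s) on the left; the freedom in the base handles is precisely what lets one trade a long $S^2$-factorization for a very short word over $T^2$ or $\Sigma_2$, and arranging these words to be decomposable will be built in from the start.

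The invariant bookkeeping pins the construction down and should be settled first. Luttinger surgery is a torus surgery supported in a $T^2\times D^2$ reglued along $T^3$, so it preserves both $e$ and $\sigma$ (hence $c_1^2=2e+3\sigma$) while keeping the manifold symplectic and lowering $b_1$ and $b_3$ by one per surgery. Thus the fibration over $\Sigma_h$ must already carry the Euler characteristic and signature of the target. Using $e(X)=(2-2\cdot 2)(2-2h)+(n+s)$ together with the genus-$2$ signature formula $\sigma(X)=-\tfrac{3}{5}n-\tfrac{1}{5}s$, the requirements $(e,\sigma)=(7,-3)$ for the $T^2$-base and $(e,\sigma)=(11,-3)$ for the $\Sigma_2$-base both force $(n,s)=(4,3)$, i.e. seven critical points in each case. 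For $h=1$ the base contributes $b_1=2$, giving $b^+=3,\ b^-=6$; for $h=2$ it contributes $b_1=4$, giving $b^+=7,\ b^-=10$. Performing two (respectively four) Luttinger surgeries that kill all of $b_1$ then lowers $b^+$ and $b^-$ each by the number of surgeries, landing exactly on $(b^+,b^-)=(1,4)$ and $(3,6)$, the intersection forms of $\CP\#4\CPb$ and $3\,\CP\#6\,\CPb$.

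The Lagrangian tori and surgeries come next, and here lies the main obstacle. Away from the critical values the fibration is a product $\Sigma_2\times F$ over a subsurface $F\subset\Sigma_h$, so for each generating loop $\lambda$ of $\pi_1(\Sigma_h)$ and a suitably chosen simple closed curve $\gamma$ in the fiber the product torus $\lambda\times\gamma$ can be made Lagrangian for a Gompf--Thurston symplectic form supported by the fibration. Luttinger surgeries along these fibered tori (one per base generator) keep the manifold symplectic and impose relations setting each base loop equal to a word in the vanishing cycles. The genuinely hard step is the $\pi_1$ computation: one must present $\pi_1(X)$ from the fiber generators modulo the seven vanishing-cycle relations and the monodromy action, together with the base generators, and then verify that introducing the Luttinger relations collapses the group to the trivial one. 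Choosing the $c_i$, the curves $\gamma$, and the surgery coefficients so that these relations conspire to kill $\pi_1$ is the delicate heart of the argument, and it is exactly where the explicit form of the short positive factorizations must be exploited.

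Finally I would assemble the conclusion. Since $\sigma=-3$ is not divisible by $16$, the intersection form is odd; being simply connected and indefinite, Freedman's theorem identifies the surgered manifold with the standard connected sum $\CP\#4\CPb$ or $3\,\CP\#6\,\CPb$. For minimality, note first that the two genus-$2$ Lefschetz fibrations are themselves minimal: taking their positive factorizations to be decomposable lets Proposition~\ref{minimalityprop}, via \cite{Usher,BaykurPAMS}, rule out any exceptional spheres. Minimality is then preserved under Luttinger surgery, so the surgered symplectic $4$-manifolds are minimal as well; since the standard $\CP\#4\CPb$ and $3\,\CP\#6\,\CPb$ are rational and hence non-minimal, these examples are necessarily exotic. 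This would complete the proof of Theorem~\ref{mainthm3}.
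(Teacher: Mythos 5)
Your overall strategy is the paper's: extend the smallest type-$(4,3)$ factorization $W_1$ to fibrations over $T^2$ and $\Sigma_2$, do Luttinger surgeries along fibered Lagrangian tori, and invoke invariance of $(\eu,\sigma)$, Freedman, and minimality via Proposition~\ref{minimalityprop} plus preservation of minimality under Luttinger surgery. Your forced computation $(n,s)=(4,3)$ and the identification of the odd, indefinite form are also correct. But there are two genuine gaps. First, your Betti-number bookkeeping assumes $b_1$ of the fibration over $\Sigma_h$ equals $2h$ (``the base contributes $b_1=2$, giving $b^+=3$, $b^-=6$''), so you plan one surgery per base generator, i.e.\ two and four surgeries. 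This undercounts: for a type-$(4,3)$ word the fiber quotient $\pi_1(\Sigma_2)/N(c_1,\ldots,c_7)\cong\Z^2$ survives (this is exactly $\pi_1(X_1)$ computed after Theorem~\ref{thesmallest}), so with trivial commutators one has $\pi_1\cong\Z^{2h+2}$, hence $b_1=4$ resp.\ $6$ and $(b^+,b^-)=(5,8)$ resp.\ $(7,10)$. The paper accordingly performs four resp.\ six Luttinger surgeries, and crucially two of them have surgery curve in the \emph{fiber} direction ($(a_2\times b,\,a_2,\,1)$ and $(b_2\times b',\,b_2,\,1)$) precisely to kill the residual fiber classes $a_2,b_2$; surgeries that only set base loops equal to words in the meridians cannot do this. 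As written, your scheme would leave $b_1=2$ and a non-simply-connected manifold of the wrong homeomorphism type.

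Second, you explicitly defer the $\pi_1$ computation, but that computation is the entire content of the proof: one must choose the tori and surgery curves, extract the meridian relations (the paper uses the Baldridge--Kirk presentation, with each meridian a specific commutator), and verify that together with the vanishing-cycle relations $a_1a_2^2b_2^2=1$, $b_1b_2=1$, and $[b_2,b]=1$ they kill every generator. Without exhibiting a specific set of tori for which this works, the statement is not proved; the existence of such a choice is not automatic and is where the explicit form of $W_1$ is actually used. A smaller omission: preservation of minimality under Luttinger surgery should be justified (the paper does so by isotoping exceptional spheres off the Lagrangian tori, citing Welschinger, and using invertibility of the surgery), rather than asserted.
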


The only other examples of minimal symplectic $4$-manifolds in Theorem~\ref{mainthm3} were given by Akhmedov and Park in \cite{AP}, and later by Fintushel and Stern for $\CP \# 4 \CPb$ in \cite{FSPinwheels}. The vantage point in our constructions is that departing from Lefschetz fibrations allows us to carry on fairly simple fundamental group calculations, and the resulting exotic $4$-manifolds are easy to describe in terms of handle decompositions (Remark~\ref{experts}). 

\vspace{0.1in}
%\enlargethispage{0.15in}
Let us finish by turning back to the fundamental point underlying our work initiated here. As discussed above, \textit{any} symplectic $4$-manifold, let it be fake, exotic, or else, would arise from a positive factorization for a Lefschetz fibration or a pencil. Our analysis of genus-$2$ Lefschetz fibrations show that minimal symplectic $4$-manifolds homeomorphic to smaller number of blow-ups of $\CP$, $\CPo \x \CPo$ or $3 \CP$ will only come from  positive factorizations in the mapping class group of higher genera surfaces. We will carry on our work in this direction elsewhere.

\vspace{0.3in}
\noindent \textit{Acknowledgements.} The main results of this article were presented at the Great Lakes Geometry Conference in Ann Arbor and the Fukaya Categories of Lefschetz Fibrations Workshop at MIT back in March 2015; we would like to thank the organizers for motivating discussions.  We also thank Hisaaki Endo and Tom Mark for their comments. The first author was partially supported by the NSF Grant DMS-$1510395$ and the Simons Foundation Grant $317732$.

\newpage
%======================================================
\section{Preliminaries and background results}  \label{preliminaries}
%======================================================

We start with a quick review of basic definitions and properties of Lefschetz fibrations and  factorizations in mapping class groups of surfaces. The reader can turn to \cite{GompfStipsicz} for more details. We will then present several preliminary results on the algebraic and differential topology of genus-$2$ Lefschetz fibrations, which we make repeated use of in the rest of the paper. 

\smallskip
\subsection{Lefschetz fibrations}  \
%======================================================

A \emph{Lefschetz fibration} on a closed, smooth, oriented $4$-manifold $X$ is a smooth surjective map  $f: X \to S^2$ whose critical locus consists of finitely many points $p_i$, at which and at $f(p_i)$ there are local complex coordinates (compatible with the orientations on $X$ and $S^2$) with respect to which $f$ takes the form $(z_1, z_2) \mapsto z_1 z_2$. We say $(X,f)$ is a \emph{genus-$g$ Lefschetz fibration} for $g$ the genus of a \emph{regular fiber} $F$ of $f$. Hereon we use the term Lefschetz fibration only when the set of critical points $\{p_i\}$ is nonempty, often referred as the Lefschetz fibration being \emph{nontrivial}. We moreover assume that all the points $p_i$ lie in distinct \emph{singular fibers}; this can always be achieved after a small perturbation. 

A \emph{section} of a Lefschetz fibration $(X,f)$ is an embedded $2$-sphere $S \subset X \setminus \{p_i\}$ intersecting all fibers of $f$ at one point, which, equivalently, is the image of a map $r\colon S^2 \to X$ such that $f \circ r = 1_{S^2}$. Blowing down any collection $\{S_j\}$ of  disjoint sections  of self-intersection $-1$, one obtains a \emph{Lefschetz pencil} with \emph{base points} $\{b_j\}$. It is shown by Donaldson that every symplectic $4$-manifold admits a Lefschetz pencil, whose base points can be blown up to arrive at a Lefschetz fibration~\cite{Donaldson}. Conversely, as shown by Gompf, for every nontrivial Lefschetz fibration $(X,f)$, one can construct a symplectic form on $X$, with respect to which regular fibers and any preselected collection of disjoint sections of $f$ are symplectic~\cite{GompfStipsicz}.  

A singular fiber of $(X,f)$ is called \emph{irreducible} if the complement of the critical point in the fiber is connected, and is called \emph{reducible} otherwise. Lefschetz singularities locally correspond to $2$-handle attachments to $D^2 \x F$ with framing $-1$ with respect to the fiber framing, where the attaching circles of these $2$-handles, called \emph{vanishing cycles}, are embedded curves in a regular fiber $F$. With this in mind, an irreducible singular fiber is given by a nonseparating vanishing cycle, and a reducible singular fiber is given by a separating one. When the latter is null-homotopic on $F$, one of the fiber components becomes an \emph{exceptional sphere}, an embedded $2$-sphere of self-intersection $-1$, which can be blown down without altering the rest of the fibration. 

A common way to construct new Lefschetz fibrations is the \emph{fiber sum} operation: Let $(X_i, f_i)$ be a genus-$g$ Lefschetz fibration with regular fiber $F_i$ for $i=1,2$. The \emph{fiber sum} \, is a genus-$g$ Lefschetz fibration $f$ on $X= (X_1, F_1)\#_{\phi}(X_2,F_2)$, obtained by removing a fibered tubular neighborhood of each $F_i$ and then identifying the resulting boundaries via complex conjugation on $S^1$ times a chosen orientation preserving-diffeomorphism $\phi: F_1 \to F_2$. We say a Lefschetz fibration $(X,f)$ is \emph{indecomposable} if it cannot be expressed as a fiber sum. 

A Lefschetz fibration $(X,f)$ is called \emph{relatively minimal} if there are no exceptional spheres contained in the fibers. In this paper, we will often say $(X,f)$ is \emph{minimal}, if there are no exceptional spheres in $X$ at all. There are nonminimal Lefschetz fibrations which are relatively minimal and do not have any sections of self-intersection $-1$ \cite{BaykurHayano, SatoGeography}. Non-minimal $(X,f)$ of positive genera are known to be indecomposable \cite{Usher, BaykurPAMS}. 

For genus-$2$ Lefschetz fibrations, which are the focus of this paper, we will use the following short-hand notation: a genus-$2$ Lefschetz fibration $(X,f)$ is said to be of \emph{type $(n,s)$} if it is relatively minimal and it has exactly $n$ nonseparating and $s$ separating vanishing cycles.

\smallskip
\subsection{Positive factorizations} \
%======================================================

Let $\Sigma_g^m$ denote a compact, connected, oriented surface genus $g$ with $m$ boundary components, and $\Gamma_g^m$ denote its \emph{mapping class group}, the group composed of orientation-preserving self homeomorphisms of $\Sigma_g^m$ which restrict to the identity along $\partial \Sigma_g^m$, modulo isotopies that restrict to the identity along $\partial \Sigma_g^m$ as well. We write $\Sigma_g = \Sigma_g^0$, and $\Gamma_g=\Gamma_g^0$ for simplicity.

We denote by $t_c \in \Sigma_g^m$ the positive (right-handed) Dehn twist along the simple closed curve $c \subset \Sigma_g^m$. Let $\{c_i\}$ be a \emph{nonempty} collection of simple closed curves on $\Sigma_g^m$, which do not become null-homotopic when $\partial \Sigma_g^m$ is capped off by disks, and let $\{\delta_j\}$ be a collection of $m$ curves parallel to distinct boundary components of $\Sigma_g^m$. For  a collection $\{ k_j \}$ of $m$ integers, if the relation 
\begin{equation} \label{factorization}
t_{c_l} \cdots t_{c_2} t_{c_1} = t_{\delta_1}^{k_1} \cdots t_{\delta_m}^{k_m} \, \, 
\end{equation}
holds in $\Gamma_g^m$, then we call the word $W$ on the left-hand side a \emph{positive factorization of length $l$} of the mapping class \, $t_{\delta_1}^{k_1} \cdots t_{\delta_m}^{k_m} $ in $\Gamma_g^n$. Capping off $\partial \Sigma_g^m$ induces a homomorphism  $\Gamma_g^m \to \Gamma_g$, under which $W$ maps to a similar positive factorization of length $l$ of the identity element $1 \in \Gamma_g$. 

The positive factorization in (\ref{factorization}) above gives rise to a genus-$g$ Lefschetz fibration $(X,f)$ with $l$ critical points and $m$ disjoint sections $S_j$ of self-intersection $S_j^2= -k_j$ \, \cite{BaykurKorkmazMonden}. Identifying the regular fiber $F$ with $\Sigma_g$, we can view the vanishing cycles of $f$ as $c_i$. In particular, we get all the information on the topology of reducible fibers (how many there are and the genera of the fiber components they split) and observe that $(X,f)$ is relatively minimal. 

In fact, Lefschetz fibrations can always be described in terms of positive factorizations. The local monodromy around the singular fiber with vanishing cycle $c_i$ is $t_{c_i}$, and thus, the global monodromy of the fibration around a disk containing all $\{f(c_i)\}$ is a product of positive Dehn twists, called a \emph{monodromy factorization}. The fact that the map extends over the base $S^2$ dictates that we get a positive factorization of the identity in $\Gamma_g$. In the presence of sections $S_j$, one can obtain a further lift of this factorization to $\Gamma_g^m$, which yields a positive factorization $W$ as in (\ref{factorization}) above  \cite{GompfStipsicz, Matsumoto, BaykurKorkmazMonden}

When $g \geq 2$, there is indeed a one-to-one equivalence between genus-$g$ Lefschetz fibrations up to isomorphisms (orientation-preserving self-diffeomorphisms of the $4$-manifold and $S^2$, which make the fibrations commute) and positive factorizations of the identity element in $\Gamma_g$ up to \textit{Hurwitz moves}  (trading subwords $t_{c_i}t_{c_{i+1}}$ with $t_{c_{i+1}}t_{c_{i+1}}^{-1}t_{c_i}t_{c_{i+1}}= t_{c_{i+1}}t_{t_{c_{i+1}}(c_i)}$) and \textit{global conjugations} (trading every $t_{c_i}$ with $t_{\phi (c_i)}$, for some $\phi \in \Gamma_g$).

%\smallskip
\newpage
\subsection{Algebraic and differential topology of Lefschetz fibrations} \
%======================================================

When a Lefschetz fibration $(X,f)$ is built from a positive factorization \linebreak $W=t_{c_l} \cdots t_{c_2} t_{c_1} $ of a product of boundary twists in $\Gamma_g^m$ as in (\ref{factorization}), one has it easy with  reading off several algebraic topological invariants of $X$ from $W$. If we assume $m \geq 1$ for the moment, i.e. when $f$ has at least one section, we have 
\[ \pi_1(X) \cong \, \pi_1(\Sigma_g) \, / \, N(c_1, \ldots, c_l) \, , \]
where $N(c_1, \ldots, c_l)$ denotes the subgroup of $\pi_1(\Sigma_g)$ generated normally by $\{c_i\}$. For $\{a_j, b_j\}$ standard generators of $\pi_1(\Sigma_g)$, we therefore get
\[ \pi_1(X) \cong \, \langle \, a_1, b_1, \ldots, a_g, b_g \, | \, [a_1, b_1] \cdots [a_g, b_g] , R_1, \ldots, R_l \, \rangle \, , \]
where each $R_i$ is a relation obtained by expressing $c_i$ (oriented arbitrarily) in  $\{a_j, b_j\}$. When $m=0$, i.e. when it is not known from the positive factorization that the fibration has a section, $\pi_1(X)$ is given by a similar presentation with possibly one additional nontrivial relation.

The Euler characteristic of $X$ is given by $\eu(X)= 4-4g+l$.  A simple formula for the signature of $X$ is also available when all $\{c_i\}$ are fixed by a hyperelliptic involution on $\Sigma_g$, in which case $(X,f)$ is called \emph{hyperelliptic}.  Since this condition always holds when $g=2$ (per $\Gamma_2$ being hyperelliptic), for the purposes of this paper, we will content ourselves with this closed formula, which  reads $\sigma(X)=-\frac{1}{5}(3n+s)$ \cite{Matsumoto, Endo}, where $n$ and $s$ are the number of nonseparating and separating curves in the collection $\{c_i\}$. 

One can also define $c_1(X)$ as the first Chern class and $\chi_h(X)$ as the holomorphic Euler characteristic for any almost complex structure compatible with say a Gompf symplectic form on $(X,f)$. We will often find it more convenient to appeal to the numerical invariants $c_1^2(X)$ and $\chi_h(X)$, even though they are determined by $\eu(X)$ and $\sigma(X)$ via $c_1^2(X) = 2\eu(X) + 3\sigma(X)$ and $\chi_h(X)= \frac{1}{4}(\eu(X)+ \sigma(X))$.

Summarizing the above for the case of genus-$2$ Lefschetz fibrations, we have:

\begin{lemma}\label{charnumbers}
If $(X,f)$ is a genus-$2$ Lefschetz fibration of type $(n,s)$, then 
\begin{itemize}
\item $ \eu(X)=n+s-4$,  \ 
\item $\sigma(X)=-\frac{1}{5}(3n+s)$,   \
\item $c_1^2(X)= \frac{1}{5}(n+7s) -8$, \
\item $\chi_h(X)= \frac{1}{10}(n+2s) -1$ . \
\end{itemize}
\end{lemma}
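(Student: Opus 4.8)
The plan is to derive all four formulas from the two fundamental inputs already assembled in the excerpt: the Euler characteristic formula $\eu(X)=4-4g+l$ and the hyperelliptic signature formula $\sigma(X)=-\frac{1}{5}(3n+s)$, together with the purely numerical identities $c_1^2(X)=2\eu(X)+3\sigma(X)$ and $\chi_h(X)=\frac{1}{4}(\eu(X)+\sigma(X))$. The key observation that glues these together is that for a genus-$2$ Lefschetz fibration of type $(n,s)$, the total number of critical points (vanishing cycles) is $l=n+s$, since every vanishing cycle is either nonseparating or separating and these are counted by $n$ and $s$ respectively.

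First I would record the Euler characteristic. Specializing $\eu(X)=4-4g+l$ to $g=2$ and $l=n+s$ gives $\eu(X)=4-8+(n+s)=n+s-4$, which is the first bullet. The signature formula is simply quoted from Matsumoto and Endo (already stated in the excerpt as applicable since $\Gamma_2$ is hyperelliptic), giving the second bullet $\sigma(X)=-\frac{1}{5}(3n+s)$ directly.

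The remaining two formulas are then routine substitutions. For $c_1^2$ I would compute
\begin{align*}
c_1^2(X)&=2\eu(X)+3\sigma(X)\\
&=2(n+s-4)-\tfrac{3}{5}(3n+s)\\
&=\tfrac{1}{5}\bigl(10(n+s-4)-(9n+3s)\bigr)\\
&=\tfrac{1}{5}(n+7s)-8,
\end{align*}
matching the third bullet. For $\chi_h$ a parallel calculation gives
\begin{align*}
\chi_h(X)&=\tfrac{1}{4}\bigl(\eu(X)+\sigma(X)\bigr)\\
&=\tfrac{1}{4}\left((n+s-4)-\tfrac{1}{5}(3n+s)\right)\\
&=\tfrac{1}{20}\bigl(5(n+s-4)-(3n+s)\bigr)\\
&=\tfrac{1}{20}(2n+4s-20)\\
&=\tfrac{1}{10}(n+2s)-1,
\end{align*}
the fourth bullet.

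There is no genuine obstacle here: the lemma is a bookkeeping consequence of facts already established in the preceding subsection, and the only thing one must be careful about is the identification $l=n+s$ and the correct bracketing in the arithmetic. The mild subtlety worth a remark is that $c_1^2$ and $\chi_h$ are \emph{a priori} defined via an almost complex structure compatible with a Gompf symplectic form on $(X,f)$, so one should note that they agree with the topologically-defined quantities $2\eu+3\sigma$ and $\frac{1}{4}(\eu+\sigma)$; but this is exactly the identity recorded in the excerpt, valid because $X$ is symplectic and hence almost complex, so no additional argument is needed.
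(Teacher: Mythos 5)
Your proposal is correct and is exactly the argument the paper intends: the lemma is stated as a summary of the preceding subsection, obtained by setting $g=2$ and $l=n+s$ in $\eu(X)=4-4g+l$, quoting the hyperelliptic signature formula, and substituting into $c_1^2=2\eu+3\sigma$ and $\chi_h=\frac{1}{4}(\eu+\sigma)$. Your arithmetic checks out and your remark about $c_1^2$ and $\chi_h$ being defined via a compatible almost complex structure matches the paper's own framing.
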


\smallskip

Next lemma gathers a collection of necessary conditions on $n$ and $s$ for $(X,f)$ to be a genus-$2$ Lefschetz fibration of type $(n, s)$, listed in an increasing order of complexity in terms of involved arguments. These all together will provide very powerful constraints on the existence of genus-$2$ Lefschetz fibrations. 

\begin{lemma}\label{constraints}
If $(X,f)$ is a genus-$2$ Lefschetz fibration of type $(n,s)$, then
\begin{itemize}
\item $n+2s \equiv 0 \, \, (\text{\rm mod } 10)$, \
\item $2n -s \geq 3$, 
%\item $b^-(X) \geq s+1$ \
\item $n+7s \geq 20$  . \
\end{itemize}
\end{lemma}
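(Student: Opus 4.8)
The plan is to establish the three inequalities separately, since each rests on a different layer of the theory. The first congruence $n + 2s \equiv 0 \pmod{10}$ is purely arithmetic: by Lemma~\ref{charnumbers} we have $\chi_h(X) = \tfrac{1}{10}(n+2s) - 1$, and since the holomorphic Euler characteristic $\chi_h(X)$ is always an integer, $n + 2s$ must be divisible by $10$. This requires no geometry beyond knowing $\chi_h$ is integral, which follows from its expression in terms of $\eu(X)$ and $\sigma(X)$ together with the fact that $c_1^2(X) + \sigma(X) \equiv 0 \pmod 4$ for a closed almost-complex $4$-manifold (equivalently, $\chi_h \in \Z$). So I would dispatch this item in one line.

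For the second inequality $2n - s \geq 3$, the natural route is to relate it to a positivity constraint on the fiber sum / section structure, but a cleaner approach uses the algebraic topology already set up. Observe that $2n - s$ is, up to constants, controlled by $c_1^2$ and the Euler characteristic: indeed $c_1^2(X) + \eu(X) = \tfrac{1}{5}(n+7s) - 8 + (n+s-4) = \tfrac{1}{5}(6n + 12s) - 12$, so tracking combinations of these is delicate. The more direct argument I would pursue is that for a \emph{relatively minimal} genus-$2$ Lefschetz fibration the total space cannot be too negative; specifically, since $(X,f)$ admits a symplectic structure with symplectic fiber, $b^+(X) \geq 1$, and $b^+(X) = 2\chi_h(X) - 1 = \tfrac{1}{5}(n+2s) - 3$. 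Combined with the signature formula, requiring $b^-(X) \geq 0$ or, better, using that the number of nonseparating vanishing cycles must be large enough to kill $\pi_1$ and make the fibration nontrivial, should yield $2n - s \geq 3$. I expect this to reduce to the statement $b^+(X) \geq 1$ rewritten as $n + 2s \geq 20$, but the correct target inequality $2n - s \geq 3$ has a different flavor, so I would look for it as the condition that $b_2^+ - (\text{something}) \geq 0$, or more likely as a consequence of $\eu(X) \geq $ a bound forced by nontriviality ($l = n + s \geq 1$) together with a lower bound on $n$ relative to $s$ coming from the fact that separating twists alone cannot generate a nontrivial fibration on a closed base.

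The third inequality $n + 7s \geq 20$ is, by Lemma~\ref{charnumbers}, exactly the statement $c_1^2(X) \geq 0$, since $c_1^2(X) = \tfrac{1}{5}(n+7s) - 8$ gives $n + 7s = 5\,c_1^2(X) + 40 \geq 20$ precisely when $c_1^2(X) \geq -4$. So the real content is a lower bound on $c_1^2(X)$. For a relatively minimal Lefschetz fibration of genus $\geq 2$, the total space is a minimal symplectic $4$-manifold of non-negative $c_1^2$ when it is not rational or ruled; the sharp bound comes from the Noether-type inequality or from Seiberg-Witten theory (Taubes). Here I would invoke that genus-$2$ Lefschetz fibrations satisfy $c_1^2 \geq -4$, which is where the Seiberg-Witten input enters, presumably through the companion Lemma~\ref{atmost30} or through the symplectic minimality discussion; the constant $-4$ matches $c_1^2(X) = -4$ being the floor, translating to $n + 7s \geq 20$.

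\medskip

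\noindent\textbf{Main obstacle.} The first and third items are essentially repackagings of integrality of $\chi_h$ and non-negativity (up to the $-4$ slack) of $c_1^2$, so they are routine once the right invariant bound is cited. The genuine difficulty is the middle inequality $2n - s \geq 3$: unlike the others it is not a direct positivity statement about a standard characteristic number, and it must encode that a nontrivial genus-$2$ Lefschetz fibration needs enough \emph{nonseparating} vanishing cycles. I expect the crux to be a careful homological or mapping-class-group argument showing that the nonseparating twists dominate — for instance that the product of the separating twists alone lies in a proper subgroup and cannot by itself realize the boundary-twist relation, forcing $n$ to exceed a linear function of $s$. Pinning down the exact constant $3$ (rather than, say, $1$ or $2$) is the subtle point, and I anticipate it requires either the precise presentation of $\pi_1(X)$ above or an argument bounding $b^+$ and $b^-$ simultaneously against $\eu$ and $\sigma$.
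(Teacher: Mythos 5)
Your treatment of the first item is correct, though it takes a different route from the paper: you derive $10 \mid (n+2s)$ from the integrality of $\chi_h(X)=\frac{1}{10}(n+2s)-1$ for the almost complex $4$-manifold $X$, whereas the paper reads it off from the abelianization $H_1(\Gamma_2)\cong\Z_{10}$, in which a nonseparating twist is $\bar 1$ and a separating twist is $\bar 2$. Both work. The problems are with the second and third items, where your text identifies the right reformulations but does not actually supply the arguments.

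For $2n-s\geq 3$ you correctly guess that one should play $b^\pm$ off against $\eu$ and $\sigma$, but the essential geometric input is missing. The paper's mechanism is: each of the $s$ reducible fibers contains a torus component of self-intersection $-1$, and these $s$ disjoint tori span an $s$-dimensional subspace $V\subset H_2(X;\R)$ on which $Q_X$ is negative definite; moreover $[F]\neq 0$ (since $c_1$ evaluates as $-2$ on a regular fiber) and $[F]\in V^{\perp}$, so the nondegenerate form on $V^{\perp}$ produces one further class of negative square, giving $b^-(X)\geq s+1$. Feeding this into $b^-(X)=\frac{1}{5}(4n+3s)-3+b_1(X)$ and using $b_1(X)\leq 3$ (there is at least one nonseparating vanishing cycle) yields $4n-2s\geq 5$, which is upgraded to $4n-2s\geq 6$ by parity — that parity step is exactly where the constant $3$ comes from, and your proposal has no mechanism to produce it. Your alternative suggestion (that the separating twists lie in a proper subgroup and cannot realize the relation alone) does not obviously lead to any quantitative bound.

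For $n+7s\geq 20$ you correctly translate the claim into $c_1^2(X)\geq -4$, but then you essentially assert that bound, which is circular. Taubes's theorem cannot be applied to $X$ directly: $X$ need not be minimal, may be rational or ruled, and may have $b^+=1$. The paper's entire content here is the doubling trick: form the fiber sum $DX=(X,f)\#_1(X,f)$, which is minimal and has $b^+(DX)>1$ by Stipsicz, so Taubes gives $c_1^2(DX)\geq 0$; then the computation $c_1^2(DX)=2c_1^2(X)+8$ (from $\eu(DX)=2\eu(X)+4$ and Novikov additivity of $\sigma$) yields $c_1^2(X)\geq -4$. Without passing to an auxiliary minimal manifold with $b^+>1$, the Seiberg--Witten input you want to invoke simply is not available.
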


% MUSTAFA: The signature formula does not bring more constraints. It will imply $3n+s \cong 0 \, \, (\text{mod } 5)$, but from $n+2s \cong 0 \, \, (\text{mod } 10)$ we already known $n$ is even, say $n=2n'$. So $2(n'+s) \cong 0 \, \, (\text{mod } 10)$ implies $n'+s \cong 0 \, \, (\text{mod } 5)$. The fact that signature is an integer also reads as $6n'+s \cong n'+s \cong 0 \, \, (\text{mod } 5)$.

\begin{proof}
The first relation is well-known. The first homology group of $\Gamma_2$ is $\Z_{10}$, where any Dehn twist along a nonseparating curve corresponds to $\bar{1}$ and any Dehn twist along a nontrivial separating curve corresponds to $\bar{2}$. From  the monodromy factorization of $(X,f)$ we then deduce that $n+2s$ is a multiple of $10$.

The second relation will follow from a lengthy play with the characteristic numbers of a genus-$2$ Lefschetz fibration . Picking an irreducible fiber component $F_i$ of self-intersection $-1$ from each reducible singular fiber, we obtain a disjoint collection of $s$ embedded tori. The homology classes $A_i=[F_i]$, for $i=1, \ldots, s$, generate an $s$-dimensional subspace $V$ of $H_2(X; \R)$, to which the restriction of the intersection form $Q_X$ is negative-definite. On the other hand, the first Chern class of an almost complex structure associated to $f$ evaluates on a regular fiber $F$ as \linebreak $\eu(F)=2-2g(F)= -2 \neq 0$, so $[F]\neq 0$ in  $H_2(X; \R)$. Since $[F] \in V^{\perp}$, the orthogonal complement of $V$ with respect to $Q_X$, and since $Q_X|_{V^{\perp}}$ is nondegenerate, there exists a class $A$ in 
 $V^{\perp}$ with negative square. Thus we get an $(s+1)$-dimensional subspace $W$ of $H_2(X; \R)$ generated by $A$ and all $A_i$, on which $Q_X$ is negative-definite. We conclude that 
\begin{equation}\label{b^-}
b^-(X) \geq s+1. 
\end{equation}

Now from the equalities
\[ 2 - 2b_1(X) + b^+(X) +b^-(X) = \eu(X) = n+s-4 \, \]
\[  b^+(X) - b^-(X) = \sigma(X)=-\frac{1}{5}(3n+s) \]
we derive 
\begin{equation} \label{b1}
 b^-(X) = \frac{1}{5}(4n+3s) - 3 +b_1(X) \, . 
\end{equation}
Combining it with $b^-(X) \geq s+1$ above, we get
\begin{equation} \label{b1ineq}
b_1(X) \geq 4-\frac{2}{5}(2n-s) \, . 
\end{equation}
Since $(X,f)$ should have at least one nonseparating vanishing cycle \cite{SmithHodge, StipsiczChern}, we have $b_1(X) \leq b_1(\Sigma_2) -1 =3$. So from $(\ref{b1ineq})$ we get: 
$4n -2s \geq 5 $. Since $4n -2s$ is even, we have $4n -2s \geq 6$, or equivalently $2n -s \geq 3 $. 

Lastly, if we take the fiber sum of two copies of $(X,f)$, then the resulting \linebreak genus-$2$ Lefschetz fibration $(DX, Df):= (X, f) \#_{1} (X,f)$, \textit{the double of $(X,f)$}, is minimal. It is  observed by Stipsicz \cite{StipsiczChern} that  $b^+(DX)>1$ , so by the work of Taubes \cite{Taubes, Taubes2}, $c_1^2(DX) \geq 0$. We easily calculate  $\eu(DX)= 2\eu(X) - 2 \eu(F)=2 \eu(X) +4$ and $\sigma(DX)=2\sigma(X)$ (by Novikov additivity)  to conclude that $c_1^2(DX)= 2 c_1^2(X)+8$.   In turn, $c_1^2(X) \geq -4$, which implies $n+7s \geq 20$. 
\end{proof}

\vspace{0.1in}

Finally, we consider a special class of positive factorizations we will work with. Let $W$ be a positive factorization of the form\, $W= W' W''$ in $\Gamma_g^m$, where $W', W''$ are products of positive Dehn twists along curves which do not become null-homotopic when $\partial \Sigma_g^m$ is capped off. If the product $W' = \prod t_{c_i}$, as a mapping class, commutes with  some fixed $\phi \in \Gamma_g^m$, we can produce a new positive factorization $\widetilde{W}= (W')^{\phi} W''$, where $(W')^{\phi}$ denotes the conjugate factorization
\[ (W')^{\phi}:=\phi W' \phi^{-1} = \phi (\prod t_{c_i}) \phi^{-1} = \prod (\phi t_{c_i} \phi^{-1}) = \prod  t_{\phi(c_i)} \, . \]
In particular, if $W'$ (and thus $W''$) is a positive factorization, then for \emph{any} $\phi$, we get a new positive factorization $\widetilde{W}= (W')^{\phi} W''$, since boundary parallel Dehn twists are central in $\Gamma_g^m$. We claim that in this case, a Lefschetz fibration $(\widetilde{X}, \widetilde{f})$ constructed from $\widetilde{W}$ is always minimal. Our assumption on the Dehn twist curves in $W', W''$ implies that $g \geq 1$. It follows that $(\widetilde{X},\widetilde{f})$ is a fiber sum of two nontrivial genus $g \geq 1$ Lefschetz fibrations constructed from the positive factorizations $W', W''$ \cite{BaykurKorkmazMonden}. By a theorem of Usher on \emph{symplectic sums} \cite{Usher} (or see \cite{BaykurPAMS}[Theorem~1] for a simpler proof set only for fiber sums of Lefschetz fibrations), $\widetilde{X}$ is minimal. Hence we have the following conclusion, which will become our main criterion when arguing the minimality of certain Lefschetz fibrations we will construct in this paper:

\begin{proposition} \label{minimalityprop}
Let $(\widetilde{X},\widetilde{f})$ be a Lefschetz fibration constructed from a positive factorization $\widetilde{W}= (W')^{\phi} \, W''$ in $\Gamma_g$, where $W', W''$ are positive factorizations themselves, and $\phi$ any mapping class in $\Gamma_g$. Then $\widetilde{X}$ is a minimal symplectic $4$-manifold.  
\end{proposition}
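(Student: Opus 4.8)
The plan is to reduce the minimality of $(\widetilde{X}, \widetilde{f})$ to the already-cited minimality theorem for fiber sums of Lefschetz fibrations, the bulk of the argument being the verification that $\widetilde{W}$ genuinely expresses $\widetilde{X}$ as a nontrivial fiber sum of two positive-genus Lefschetz fibrations. First I would observe that since $\phi t_{c} \phi^{-1} = t_{\phi(c)}$ and boundary-parallel Dehn twists are central in $\Gamma_g^m$, the conjugated word $(W')^{\phi}$ is itself a positive factorization of the \emph{same} product of boundary twists as $W'$; hence $\widetilde{W} = (W')^{\phi} W''$ is a legitimate positive factorization of the identity (after capping off), and so it does give rise to a genuine genus-$g$ Lefschetz fibration $(\widetilde{X}, \widetilde{f})$ in the first place. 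This is the point where the centrality of $t_{\delta_j}$ is essential: it guarantees that conjugating only the first block $W'$ does not disturb the right-hand side of the factorization relation.

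Next I would recall the hypothesis that every Dehn-twist curve appearing in $W'$ and $W''$ remains non-null-homotopic after capping off $\partial \Sigma_g^m$. This immediately forces $g \geq 1$: on a genus-$0$ surface every simple closed curve that is not boundary-parallel bounds a disk once the boundary is capped, so such curves would become null-homotopic, contradicting the assumption. With $g \geq 1$ established, each of $W'$ and $(W')^{\phi}$ and $W''$ defines a \emph{nontrivial} positive-genus Lefschetz fibration (nontriviality because the collections of vanishing cycles are nonempty, as they are honest positive factorizations).

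The central structural step is then to invoke the fact, recorded earlier in the excerpt and attributed to \cite{BaykurKorkmazMonden}, that a Lefschetz fibration whose monodromy factorization splits as a product of two sub-factorizations — each again a positive factorization of a product of boundary twists — is precisely the fiber sum of the two corresponding Lefschetz fibrations. Applying this to the decomposition $\widetilde{W} = (W')^{\phi} \cdot W''$ exhibits $(\widetilde{X}, \widetilde{f})$ as the fiber sum of the fibration built from $(W')^{\phi}$ and the fibration built from $W''$, both of fiber genus $g \geq 1$. Finally, by Usher's theorem on minimality of symplectic (fiber) sums \cite{Usher}, or the streamlined version for Lefschetz fibrations in \cite{BaykurPAMS}, any such fiber sum of two nontrivial positive-genus Lefschetz fibrations has minimal total space, i.e.\ $\widetilde{X}$ contains no exceptional spheres at all. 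Since Gompf's construction endows $\widetilde{X}$ with a symplectic form, this yields the claimed conclusion that $\widetilde{X}$ is a minimal symplectic $4$-manifold.

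The main obstacle I anticipate is not any delicate estimate but rather the bookkeeping needed to confirm that $(W')^{\phi}$ and $W''$ really are positive factorizations \emph{of products of boundary-parallel twists} (as opposed to merely products of positive Dehn twists whose product happens to lie in $\Gamma_g^m$), so that the fiber-sum identification of \cite{BaykurKorkmazMonden} applies verbatim. This hinges entirely on the centrality of the boundary twists under conjugation by $\phi$, and on the standing hypothesis that $W'$ and $W''$ are \emph{individually} positive factorizations — a hypothesis built into the statement of the proposition — so in the setting as stated the verification is routine. The one subtlety worth flagging explicitly is that the theorem of Usher requires the summands to be genuinely nontrivial and of positive genus; both follow from the non-null-homotopy assumption on the Dehn-twist curves together with $g \geq 1$, so no further case analysis is required.
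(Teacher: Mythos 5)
Your proposal is correct and follows essentially the same route as the paper: observe that centrality of the boundary twists makes $(W')^{\phi}$ a positive factorization of the same boundary multitwist, note that the non-null-homotopy hypothesis forces $g \geq 1$, identify $\widetilde{W}=(W')^{\phi}W''$ as a fiber sum of two nontrivial positive-genus Lefschetz fibrations via \cite{BaykurKorkmazMonden}, and conclude minimality from Usher's theorem (or \cite{BaykurPAMS}). No substantive differences to report.
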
 

We note that when $\phi$ is a Dehn twist $t_{\alpha}^{\pm 1}$, the symplectic Lefschetz fibration $(\widetilde{X},\widetilde{f})$ prescribed by $\widetilde{W}= (W')^{\phi} \, W''$ is obtained from the symplectic Lefschetz fibration $(X,f)$ prescribed by $W= W' W''$ via \emph{fibered Luttinger surgery} \cite{Auroux, BaykurLuttingerLF}. This is a special instance of \emph{Luttinger surgery}, where one cuts out a Weinstein tubular neighborhood of an embedded Lagrangian torus \,$T$ in a symplectic $4$-manifold $X$ and glues it back in differently in a way that the symplectic form in the complement extends \cite{Luttinger, ADK}. The new symplectic $4$-manifold $\widetilde{X}$ is determined by $T \subset X$, along with a primitive curve $\lambda$ on $T$ and an integer $k$, called the \emph{surgery curve} and the \emph{surgery coefficient}. 
In the case of fibered Luttinger surgery above, the \emph{fibered Lagrangian torus} \,$T$ in $(X,f)$ is obtained by taking the parallel transport of $\alpha$ over a loop $\gamma$ enclosing all the Lefschetz critical values coming from $W'$, and one chooses the surgery curve as $\alpha$ and the surgery coefficient $k=\pm 1$ to arrive at $(\widetilde{X},\widetilde{f})$.

%======================================================
\section{Small positive factorizations and Lefschetz fibrations} \label{smallLFs} 
%======================================================
Similar to measuring the size of a $4$-manifold $X$ by the rank of $H_2(X)$, we will measure the size of a genus-$g$ Lefschetz fibration $(X,f)$ by what amounts to $H_2(X)$ once the genus $g$ is fixed: by the number of critical points $l$.  For which values of $g,l$ a genus-$g$ Lefschetz fibration with $l$ critical points would qualify to be called ``\emph{small}'' is a question contingent to the formulation of the focus problem (as in the case of ``small'' $4$-manifolds). In this paper, we will call a genus-$2$ Lefschetz fibration small when $l \leq 30$. As we will observe in Section~\ref{geography}, for a simply-connected $X$, this is  the case for a minimal $(X,f)$ precisely when $X$ is in the homeomorphism classes of $\CP \# p \CPb$  with $p \leq 9$, or $3 \CP \# q \CPb$ with $q \leq 19$. 

%\smallskip
This section is devoted to the study of (very) small indecomposable genus-$2$ Lefschetz fibrations.

\subsection{Reverse engineering small positive factorizations} \label{sec:3.1} \
%======================================================

Here we outline a general procedure to produce positive factorizations in $\Gamma_g^m$ with small numbers of Dehn twists. We will call this procedure \emph{reverse engineering}, not only because it reflects well the very nature of this process as we will explain shortly, but also to draw a parallel to a similar procedure introduced by Fintushel and Stern under the same name that can be employed effectively to construct small exotic $4$-manifolds \cite{FSReverseEngineering}.

Two invariants one can attach to a positive factorization $W$ in $\Gamma_g^m$ are: $\ell(W)$, the length of $W$, i.e. the number of positive Dehn twists in $W$,\, and $\sigma(W)$, the signature of $W$, which is defined for any positive factorization via Meyer's signature cocycle \cite{Meyer, Endo, EndoNagami}. The Euler characteristic and the signature of the corresponding Lefschetz fibration $(X,f)$ are determined by these two invariants via the  identities $\eu(X)= 4-4g+\ell(W)$ and $\sigma(X)=\sigma(W)$, and vice versa. Moreover, since \linebreak  $\eu(X)=2 -2 b_1(X) + b^+(X)+b^-(X)$, where $b_1(X) \leq 2g$, we see that a small positive factorization  corresponds to a genus-$g$ Lefschetz fibration $(X,f)$ with small $b^+(X)$ and  $b^-(X)$. Now if we would like to have a genus-$g$ Lefschetz fibration $(X,f)$ with small $b^+(X)$ (say $b^+(X)=1$, the smallest it can be), we observe that the best way to realize $b^-(X)$ with the least number of singular fibers is when we have many reducible fibers: for each reducible fiber of $(X,f)$ counts into $b^-(X)$ (cf. the proof of Lemma~\ref{constraints}, and also see Ozbagci's calculation of the local signature contributions \cite{Ozbagci}). Note that the a posteriori guidance in this case becomes a fact when $g=2$. We will therefore aim for small positive factorizations with \emph{many} separating Dehn twists. 

We will need two well-known relations in the mapping class group $\Gamma_g^m$ of $\Sigma_g^m$. Recall that if two curves are disjoint then the corresponding Dehn twists commute, and if two simple closed curves $a$ and $b$ on $\Sigma_g^m$ intersect transversely at one point, then we have the \emph{braid relation}
\[ t_a t_b t_a= t_b t_a t_b \, .\]
Note that modifying a given positive factorization by braid relations amount to applying Hurwitz moves, which do not change the isomorphism class of the corresponding Lefschetz fibration. Secondly, let $c_1,c_2,\ldots, c_{2k}$ be a \emph{chain} of simple closed curves on $\Sigma_g^m$
such that $c_i$ and  $c_j$ are disjoint if $|i-j|\geq 2$ and that $c_i$ and $c_{i+1}$ intersect at one point. Then a regular neighborhood of $c_1\cup c_2\cup \cdots \cup  c_{2k}$ is a subsurface of $\Sigma_g^m$ with one boundary component, $\delta$. We then have the \emph{$2k$-chain} relation \cite{BirmanHilden}
\[
\left( t_{c_1}t_{c_2}\cdots t_{c_{2k}} \right)^{4k+2}=t_{\delta}. 
\]
A notable consequence of this relation is that for any curve $\delta$ on $\Sigma_g^m$ bounding a subsurface  $\Sigma_{k}^1$, the separating Dehn twist $t_{\delta}$ can be replaced by a product of \linebreak $8k^2+4k$  nonseparating Dehn twists.  In turn, any positive factorization in $\Gamma_g^m$, $m\leq 1$,  can be turned into one with only nonseparating Dehn twists.

If \,$W_0$ is a positive factorization obtained from $W$ by replacing $t_{\delta}$ with the product $\left( t_{c_1}t_{c_2}\cdots t_{c_{2k}} \right)^{4k+2}$, then we have \cite{EndoNagami}[Proposition~3.9]: 
\begin{equation} \label{REequations}
\ell(W_0)=\ell(W) +8k^2+4k-1, \text{ and } \sigma(W_0)= \sigma(W)-4k^2-4k+1.
\end{equation}

We can now describe our general construction scheme, which relies on reversing the above trick.
To reverse engineer a small positive factorization of given length $\ell$ and signature $\sigma$, we first look for a \emph{model} positive factorization $W_0$, a known factorization, whose length $\ell(W_0)$ and signature $\sigma(W_0)$ can be derived from $\ell$ and $\sigma$ via $(\ref{REequations})$ by replacing a number of separating vanishing cycles. Moreover we want $W_0$ to contain many Dehn twists along chains of curves. We then apply braid relations to $W_0$, and also to any other word we derive at intermediate steps, to get factors conjugate to $\left( t_{c_1}t_{c_2}\cdots t_{c_{2k}} \right)^{4k+2}$, for some $1 \leq k \leq g-1$. These factors will be then traded with separating Dehn twists to arrive at a positive factorization $W$ with desired $\ell$ and $\sigma$.

\vspace{0.1in}
\noindent \textit{When $g=2$: } Our procedure is much simplified and easier to demonstrate in this case. Suppose that $m\leq 1$. There is only one type of nontrivial separating vanishing cycle other than curve parallel to boundary, and it splits off genus-$1$ subsurfaces on both sides, allowing only a $2$-chain substitution: 
\[ ( t_{c_1}t_{c_2})^{6}=t_{\delta}. \]
Moreover, when $g=2$, $\ell$ and $\sigma$ determine the number of nonseparating and separating vanishing cycles, $n$ and $s$. Call the positive factorization \emph{of type $(n,s)$} in this case. To produce a small positive factorization of type $(n,s)$ in $\Gamma_2^m$, we look for a model positive factorization with only $n+12s$ nonseparating vanishing cycles, and then employ braid relations to get subwords conjugate to $(t_{c_1}t_{c_2})^{6}$. Note, moreover, that we can employ Lemma~\ref{constraints} first to check whether a positive factorization with such $n, s$ is even plausible (see Section~\ref{geography}).

\vspace{0.1in}
\noindent \textit{An easy example:} Let us demonstrate how quickly one can reverse engineer  positive factorizations of types $(18,1)$ and $(6,2)$ in $\Gamma_2^2$. Here, we need a model factorization of type $(30,0)$, and there are not so many of them! The well-known analogue of the chain relation for a $5$-chain $c_1, \ldots, c_5$ yields the positive factorization in $\Gamma_2^2$ 
\begin{equation}\label{5chain}
 t_{\delta_1} t_{\delta_2} = (t_1 t_2 t_3 t_4 t_5)^6 \, ,
\end{equation}
which will be our model factorization $W_0$. Here we have simplified our notation by letting $t_i$ denote the Dehn twist $t_{c_i}$ along $c_i$. (We may assume that the curves $c_i$ are given in Figure~1, where one of the boundary components of $\Sigma_2^2$ is capped off.) Since $c_1, c_2$ are disjoint from $c_4, c_5$, the factors $t_1 t_2$ and $t_4 t_5$ commute. We may rewrite $W_0$ as
\[ 
t_{\delta_1} t_{\delta_2}= P_1 (t_1 t_2)^6 P_2 (t_4 t_5)^6   \, , 
\]
where $P_i$ is a product of $3$ positive Dehn twists that are all conjugates of $t_3$. By the $2$-chain relation, we get new positive factorizations  $P_1 t_c  P_2 (t_4 t_5)^6 $ (or $P_1(t_1 t_2)^6 P_2 t_{c'}) $ and $P_1 t_cP_2 t_{c'} $ of $t_{\delta_1} t_{\delta_2} $ in $\Gamma_2^2$, which are of types $(18,1)$ and $(6,2)$, respectively. Here $c$ (resp. $c'$) is the boundary component of a regular neighborhood of $c_1\cup c_2$ (resp. $c_4\cup c_5$). 
It can shown that the conjugation of the last factorization by the inverse of $t_4t_5t_4$ is equal to a lift of the well-known positive factorization of the identity obtained by Matsumoto we will discuss shortly.\footnote{The generalization of Matsumoto's fibration to $g>2$ in \cite{Korkmaz} can also be obtained by the same method; this will appear in \cite{DMP}.}

\smallskip
\subsection{The smallest genus-$2$ Lefschetz fibration} \
%======================================================

\begin{figure}[p!] 
   \begin{center}
   \includegraphics[width=10.8cm]{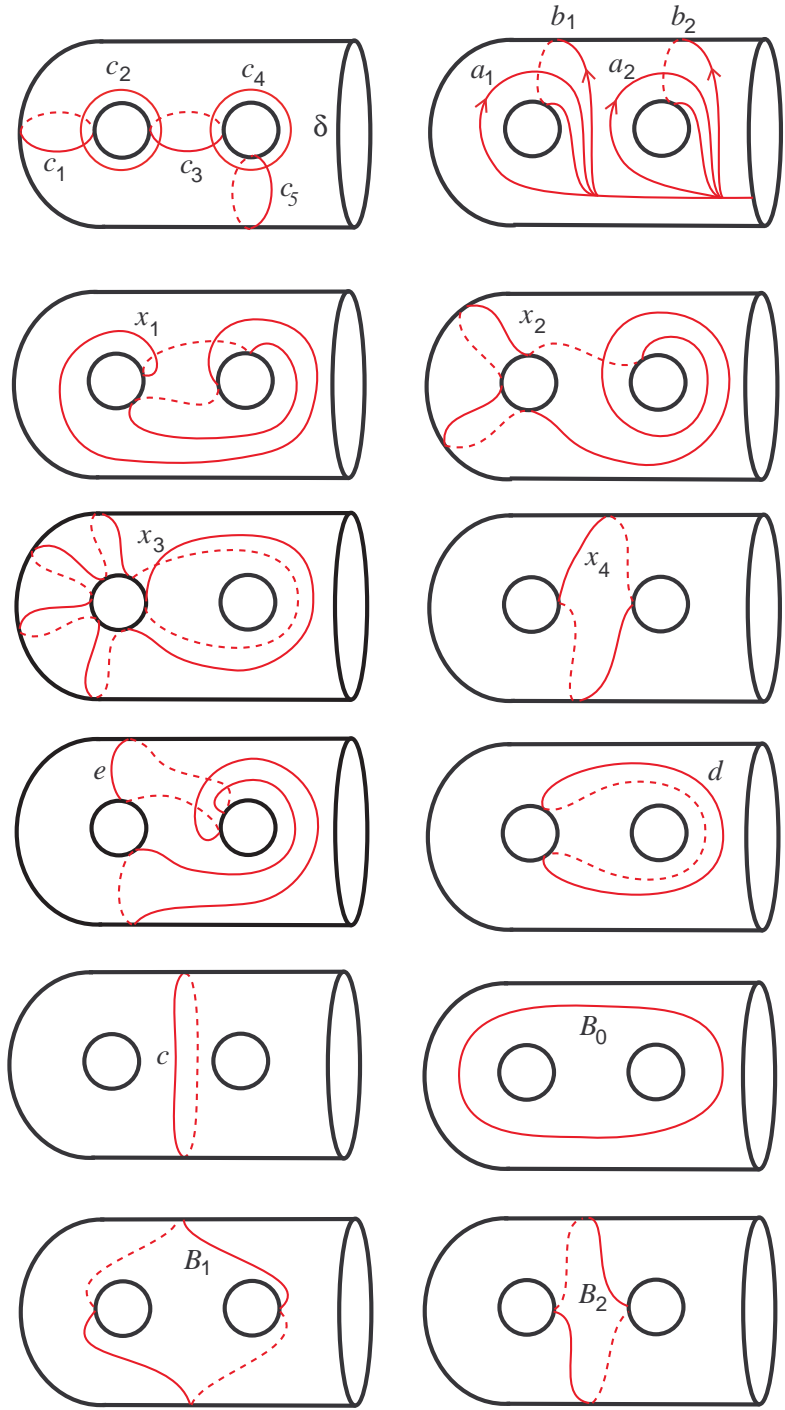}
    \end{center}
   \smallskip
   \caption{Dehn twist curves $\delta$, $c_i$, $x_i$, $B_i$, $c$, $d$, $e$ on $\Sigma_2^1$, and $\pi_1(\Sigma_2)$ generators $a_i, b_i$.}
   \end{figure} \label{curves}

 Our aim in this section is to derive the smallest genus-$2$ Lefschetz fibration. This is equivalent to find a factorization of the identity in the mapping class group $\Gamma_2$
of the smallest length. Matsomoto's well-known relation, also obtained above, is of type  $(n,s)=(6,2)$. So the length of the smallest factorization is less than or equal to $8$.
By Lemma~\ref{constraints}, there is no factorization of length less than $7$, and if there is a factorization of length $7$ then $(n,s)=(4,3)$. 

Suppose that we have a factorization of the identity of type $(4,3)$. By trading three separating Dehn twists by $12$ nonseparating Dehn twists, we get a factorization of
of type $(40,0)$. We know a factorization of this type is
\begin{equation}
(t_{c_1}t_{c_2}t_{c_3}t_{c_4})^{10}=1, \label{eqn:L40}
\end{equation}
where $c_i$ are simple the curves in Figure~1; the closed surface $\Sigma_2$ is $\Sigma_2^1$ together with a disk glued along the boundary. This is our starting point: we will 
go backward starting from (\ref{eqn:L40}), so that the positive factorization (\ref{eqn:L40}) will be our model for reverse engineering a positive factorization of  type $(4,3)$.
Instead of doing this computation in $\Sigma_2$ we will do it in $\Sigma_2^1$ instead, so that the Lefschetz fibration corresponding
to the factorization will have a section of square $-1$. 

In the mapping class group $\Gamma_2^1$, consider the $4$-chain relation 
\begin{eqnarray} \label{eqn:4-chain}
 (t_1 t_2 t_3 t_4)^{10} = t_{\delta}, \,  
\end{eqnarray}
which turns into (\ref{eqn:L40}) when a disk is glued to $\Sigma_2^1$.
From the braid relation, we see that 
\[
t_i  \left( t_1 t_2 t_3 t_4 \right)= \left( t_1 t_2 t_3 t_4 \right) t_{i-1}\]
 for $i=2,3,4$. Using this we obtain
	\begin{eqnarray*}
	\left( t_1 t_2 t_3 t_4 \right)^5 
	    &=& \left( t_1 t_2 t_3 t_4 \right)\left( t_1 t_2 t_3 t_4 \right)\left( t_1 t_2 t_3 t_4 \right)\left( t_1 t_2 t_3 t_4 \right)\left( t_1 t_2 t_3 t_4 \right)\\
	    &=& \left( t_1 t_2 t_3  \right)\left( t_1 t_2 t_3  \right)\left( t_1 t_2 t_3  \right) \left( t_1 t_2 t_3\right) \left(t_4  t_3t_2t_1\right)\left( t_1 t_2 t_3 t_4 \right)\\
	    &=& \left( t_1 t_2 \right)\left( t_1 t_2  \right)\left( t_1 t_2 \right)     \left( t_3 t_2 t_1 \right) \left( t_1 t_2 t_3\right) \left(t_4  t_3t_2t_1\right)\left( t_1 t_2 t_3 t_4 \right)\\
	    &=& \left(t_1t_2 \right)^3 \left( t_3t_2t_1t_1t_2t_3 \right)  \left(  t_4 t_3t_2t_1t_1t_2t_3 t_4\right).
	\end{eqnarray*}
We note that the factors $(t_1t_2)^3$, $( t_3t_2t_1t_1t_2t_3 )$, and $( t_4 t_3t_2t_1t_1t_2t_3 t_4)$ on the right all commute with each other: This can be seen from the
fact that $( t_4 t_3t_2t_1t_1t_2t_3 t_4)$ fixes the curves $c_1,c_2$ and $c_3$, and $( t_3t_2t_1t_1t_2t_3 )$ fixes the curves $c_1$ and $c_2$.

Starting with the $4$-chain relation (\ref{eqn:4-chain}) and then applying braid relations and the fact that Dehn twists about disjoint curves commute in $\Gamma_2^1$, we get 
\begin{eqnarray*}
t_\delta 
&=&\left( t_1 t_2 t_3 t_4 \right)^{10 }\\
&=& \left(t_1t_2 \right)^6 
       \left( t_3t_2t_1t_1t_2t_3 \right) \left( t_4 t_3t_2t_1t_1t_2t_3 t_4 \right)  
        \left( t_4  t_3t_2t_1t_1t_2t_3t_4 \right) \left(  t_3t_2t_1t_1t_2t_3 \right)\\
&=& t_c   t_3   t_2t_1t_1t_2  \left(  t_4 t_3t_2t_1t_1t_2t_3 t_4 \right)  t_3 t_3   
         \left( t_4  t_3t_2t_1t_1t_2t_3  t_4\right)   t_2t_1t_1t_2t_3 \\
&=& t_c   t_3(t_2t_1t_1t_2) t_4 t_3(t_2t_1t_1t_2)  (t_3 t_4)^3 
        ( t_2t_1t_1t_2)t_3  (t_2t_1t_1t_2)t_4t_3. 
\end{eqnarray*}
Keeping in mind that $t_\delta$ is in the center of $\Gamma_2^1$, we conjugate both sides of this equality by $t_4t_3$ and get
\begin{eqnarray*}
t_\delta 
&=& t_4t_3t_c   t_3(t_2t_1t_1t_2) t_4 t_3(t_2t_1t_1t_2)  (t_3 t_4)^3 
        ( t_2t_1t_1t_2)t_3  (t_2t_1t_1t_2) \\
&=& t_ e     t_4t_3t_3  t_4 (t_2t_1t_1t_2) t_3(t_2t_1t_1t_2)  (t_3 t_4)^3 
        ( t_2t_1t_1t_2)(t_2t_1t_1t_2) t_{x_4 },
\end{eqnarray*}
where $e=  t_4t_3(c)$ and $x_4=  (t_2t_1t_1t_2)^{-1}  (c_3)$. Here, we also used the fact that $t_4$ commutes with $t_1$ and $t_2$.

The use of the braid relations   and $ft_af^{-1}=t_{f(a)}$ give
\begin{eqnarray*}
t_\delta 
&=& t_ e     (t_4t_3t_3  t_4) t_2t_1t_1t_3 t_2t_3t_1t_1t_2)  (t_3 t_4)^3 
        ( t_2t_1t_1t_2t_2t_1t_1t_2) t_{ x_4 }   \\
&=& t_ e     t_{x_1} (t_4t_3t_3  t_4 t_1t_1t_3) t_2t_3t_1t_1t_2  (t_3 t_4)^3 
        ( t_2t_1t_1t_2t_2t_1t_1t_2) t_{ x_4 }   \\
&=& t_ e     t_{x_1}t_{x_2} (t_4t_3t_3  t_4 t_1t_1t_3t_3t_1t_1)t_2  (t_3 t_4)^3 
        ( t_2t_1t_1t_2t_2t_1t_1t_2) t_{ x_4 }   \\
&=& t_ e     t_{x_1}  t_{x_2} t_{x_3}  (t_4t_3t_3  t_4 t_1t_1t_3 t_3t_1t_1 )   (t_3 t_4)^3 
        ( t_2t_1t_1t_2t_2t_1t_1t_2) t_{ x_4 },
\end{eqnarray*}
where  $x_1=  t_4t_3t_3  t_4 (c_2)$, $x_2=  t_1t_1t_3  (x_1)$ and $x_3=  t_1t_1t_3  (x_2)$.
This may be rewritten as 
\begin{eqnarray*}
&=& t_ e     t_{x_1}  t_{x_2} t_{x_3}  t_4t_3t_3  t_4 t_3 t_3    (t_3 t_4)^3 
        (t_1 t_1t_1t_1t_2t_1t_1t_2 t_2t_1t_1t_2) t_{ x_4 }   \\
&=& t_ e     t_{x_1}  t_{x_2} t_{x_3}   (t_3 t_4)^6 (t_1t_2)^6  t_{ x_4 }.
\end{eqnarray*}
Finally, by the $2$-chain relation, we write $t_d=  (t_3t_4)^6$  and $t_c=  (t_1t_2)^6$ to arrive at the desired result
\begin{eqnarray*}
t_\delta &=& t_ e     t_{x_1}  t_{x_2} t_{x_3}   t_d  t_c  t_{ x_4 }.
\end{eqnarray*}
The nonseparating curves $x_i$, and the separating curves $c,d,e$ are as shown in Figure~1. 

Set $\underline{W_1= t_ e     t_{x_1}  t_{x_2} t_{x_3}   t_d  t_c  t_{ x_4 } }$, so it is a positive factorization of $t_{\delta}$ in $\Gamma_2^1$ and let $(X_1, f_1)$ be the corresponding Lefschetz fibration of type $(4,3)$, with a section of self-intersection $-1$. Since any genus-$2$ Lefschetz fibration should have at least $7$ singular fibers \cite{Ozbagci, SatoGeography}, which is also deduced from Lemma~\ref{constraints},  $(X_1, f_1)$ realizes the smallest possible genus-$2$ Lefschetz fibration:

\begin{theorem} \label{thesmallest}
The positive factorization $W_1=t_ e     t_{x_1}  t_{x_2} t_{x_3}   t_d  t_c  t_{ x_4 } = t_\delta$ in $\Gamma_2^1$ prescribes \emph{the smallest} genus-$2$ Lefschetz fibration. 
\end{theorem}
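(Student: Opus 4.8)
The plan is to prove two things: that $W_1$ is a genuine positive factorization of the boundary twist $t_\delta$ in $\Gamma_2^1$ of type $(n,s)=(4,3)$, hence of length $\ell=n+s=7$; and that no nontrivial genus-$2$ Lefschetz fibration can have fewer than $7$ singular fibers. The first assertion is already delivered by the chain of braid relations and commutations carried out above, which reduces the $4$-chain relation $(t_1t_2t_3t_4)^{10}=t_\delta$ to $W_1$ after two applications of the $2$-chain relation, $(t_1t_2)^6=t_c$ and $(t_3t_4)^6=t_d$. In writing out the proof I would only need to confirm the bookkeeping of the vanishing cycles produced along the way: that the four curves $x_1,x_2,x_3,x_4$ obtained as images under the successive Hurwitz moves are nonseparating, while $c,d,e$ are separating, exactly as drawn in Figure~1. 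This identifies $(X_1,f_1)$ as a relatively minimal fibration of type $(4,3)$ with a section of square $-1$, and in particular with $7$ critical points.

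The content of the word \emph{smallest} is then the matching lower bound, which I would extract purely from Lemma~\ref{constraints}. Since the length of a type-$(n,s)$ factorization is $\ell=n+s$, I must show $n+s\geq 7$ for every admissible pair. Nontriviality forces $n\geq 1$, so the congruence $n+2s\equiv 0 \pmod{10}$ gives $n+2s=10m$ for some integer $m\geq 1$. Substituting $n=10m-2s$ into the inequality $2n-s\geq 3$ yields $20m-5s\geq 3$, hence $s\leq 4m-1$. Therefore $\ell=n+s=10m-s\geq 10m-(4m-1)=6m+1\geq 7$, with equality forcing $m=1$ and $s=3$, i.e. $(n,s)=(4,3)$. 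This shows every nontrivial genus-$2$ Lefschetz fibration has at least $7$ singular fibers, matching the count for $(X_1,f_1)$.

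Combining the two parts, $(X_1,f_1)$ attains the minimum possible length, so it prescribes the smallest genus-$2$ Lefschetz fibration. I expect the genuinely delicate step to be not the optimization, which is an elementary finite check, but rather the verification that the iterated Hurwitz moves track the curves correctly, so that the resulting word really is a positive factorization of type $(4,3)$ with the stated separating and nonseparating vanishing cycles; this is precisely the computation carried out in the preceding paragraphs, and I would treat it as the heart of the argument, with Lemma~\ref{constraints} supplying the clean lower bound.
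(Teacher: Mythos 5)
Your proposal is correct and follows the paper's argument: the explicit reverse-engineered factorization from the $4$-chain relation supplies the type-$(4,3)$ example, and the lower bound $\ell\geq 7$ (with equality only for $(n,s)=(4,3)$) comes from Lemma~\ref{constraints}, exactly as the paper asserts (there citing \cite{Ozbagci, SatoGeography} but noting the bound is "also deduced from Lemma~\ref{constraints}"). Your only addition is to spell out the arithmetic of that deduction, which the paper leaves implicit, and your computation is right.
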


We note that we do not claim that the smallest genus-$2$ Lefschetz fibration is \textit{unique}  up to isomorphism. However, it is true that the total space is unique up to diffeomorphism. As seen above, a smallest genus-$2$ Lefschetz fibration is of type $(4,3)$.  By Proposition~\ref{difftypes} given at the end of this section, any such Lefschetz fibration has total space diffeomorphic to $(S^2 \x T^2) \, \# \, 3 \CPb$. For instance, the semi-stable holomorphic genus-$2$ fibration with $7$ critical points constructed by Xiao in \cite{Xiao}, even though it lacks any further information on its monodromy or the total space, would be supported on the same manifold.

\smallskip
We finish by giving an explicit presentation of $\pi_1(X_1)$ induced by $W_1$. 
In a group $G$, let $\bar{a}$ denote the inverse of $a$ and let $[a,b]$ denote the commutator $ab\bar{a}\bar{b}$. 

Since the Lefschetz fibration $(X_1, f_1)$ has a section,  we get a presentation of $\pi_1(X_1)$, from the monodromy,
with generators $a_1, b_1, a_2, b_2$ as in Figure~1, and with the following defining relators:
\begin{eqnarray} 
& \phantom{o} &  [a_1,b_1][a_2,b_2]=1,  \label{eqn:prestn0}  \\
& \phantom{o} &  a_1 r^2=1,                    \label{eqn:prestn1}		\\  
& \phantom{o} &  a_1  \bar{b}^2_1 r a_2=1,			\label{eqn:prestn2}		\\
& \phantom{o} &  a_1  \bar{b}^4_1 r   a_2   \bar{r} a_2 =1,	\label{eqn:prestn3}	\\
& \phantom{o} &  a_1  b_1 \bar{a}_1    a_2  b_2 \bar{a}_2 =1,	\label{eqn:prestn4}	\\
& \phantom{o} &  a_1  b_1 \bar{a}_1     \bar{b}_2   \bar{a}_2   r  =1,\label{eqn:prestn5}	\\
& \phantom{o} &  [r,  \bar{a}_2]   = 1,  \label{eqn:prestn6}\\
& \phantom{o} &  [a_1 , b_1]   = 1. \label{eqn:prestn7}
\end{eqnarray}
Here $r= \bar{b}_1a_2b_2$. The relators~(\ref{eqn:prestn1}),(\ref{eqn:prestn2}),\ldots, (\ref{eqn:prestn7}) come from the 
vanishing cycles $x_1,x_2,x_3,x_4,e,d,c$ respectively.

The relations~(\ref{eqn:prestn0}) and~(\ref{eqn:prestn7}) imply that $[a_2,b_2]=1$. We may then rewrite the relation~(\ref{eqn:prestn4}) as 
\[ b_1 b_2 = (a_1  b_1 \bar{a}_1)  (a_2  b_2 \bar{a}_2) = 1 . \]
With this new relation, all the other relations either trivialize or become 
\[ a_1 a_2^2 b_2^4 =1 , \]
which shows that $a_1$ can be generated by $a_2, b_2$. Thus $\pi_1(X)$ is a free abelian group of rank $2$ generated by  $a_2$ and $b_2$
(also by $a_2$ and $b_1$).

\smallskip
\subsection{Other small genus-$2$ Lefschetz fibrations} \
%======================================================

The positive factorization $W$ in $\Gamma_2^2$ of type $(6,2)$ we reverse engineered as a sampler at the end of Section~\ref{sec:3.1}
is a lift of  a well-known positive factorization constructed by Matsumoto \cite{Matsumoto}, which we will favor over $W$ because of its neatly symmetric presentation. For the simple closed curves $B_0,B_1,B_2,c$   on $\Sigma_2^1$ given in Figure~1, we also have \cite{Matsumoto, Korkmaz}:
\[ \left( t_{B_0} t_{B_1} t_{B_2} t_{c}\right)^2= t_{\delta} \]
in  $\Gamma_2^1$. The positive factorization $\underline{ W_2=(t_{B_0} t_{B_1} t_{B_2} t_{c} )^2}$ of $t_{\delta} $ then gives Matsumoto's genus-$2$ Lefschetz fibration of type $(6,2)$, with a section of self-intersection $-1$. 

The fundamental group $\pi_1(X_2)$ of $X_2$ admits a presentation with generators $a_1,b_1,a_2,b_2$ and with relations 
\begin{eqnarray}
 & \phantom{o} &   [a_1,b_1][a_2,b_2]=1, \nonumber \\
 & \phantom{o} &   a_1a_2=1, \label{eqn:prestn8} \\
 & \phantom{o} &   a_1 \bar{b}_1 a_2 \bar{b}_2  =1, \label{eqn:prestn9} \\
 & \phantom{o} &  b_2b_1  =1,  \label{eqn:prestn10} \\
 & \phantom{o} &  [a_1,b_1]  =1, \nonumber
\end{eqnarray}
which is easily seen to be a free abelian group of rank $2$ generated by $a_1$ and $b_1$ (also by $a_2$ and $b_2$). 
The last four relations come from the vanishing cycles $B_0,B_1,B_2$ and $c$, respectively.

Three more positive factorizations will be needed in our later constructions. 

The first one is the positive factorization we obtained in the process of deriving  $W_1$. That is, we can trade back $t_c$ in $W_1$ with $(t_{c_1} t_{c_2})^6$ and obtain a positive factorization $\underline{ W_3= t_ e     t_{x_1}  t_{x_2} t_{x_3}   t_d \, (t_{c_1} t_{c_2})^6  \, t_{ x_4 } }$ of  $t_\delta$ in $\Gamma_2^1$. Let $(X_3, f_3)$ be the corresponding \linebreak genus-$2$ Lefschetz fibration of type $(16,2)$. The fundamental group $\pi_1(X_3)$ has a presentation with generators $a_1, b_1, a_2, b_2$ and with relations $(\ref{eqn:prestn0})-\ref{eqn:prestn6})$ together with
the additional relations 
\begin{eqnarray}
 & \phantom{o}   b_1 =1,   \label{eqn:prestn11}  \\
 & \phantom{o}   a_1 =1,  \label{eqn:prestn12} 
\end{eqnarray}
induced by the new vanishing cycles $c_1$ and $c_2$.  An easy calculation shows that $\pi_1(X_3)$ is isomorphic to the cyclic group $\Z_2$ of order two, and is generated by $a_2$. 

The second one is derived from $W_2$ by trading one $t_c$ with $(t_{c_1} t_{c_2})^6$, so we get a  positive factorization \,$\underline{ W_4= t_{B_0} t_{B_1} t_{B_2} t_{c} t_{B_0} t_{B_1} t_{B_2} (t_{c_1} t_{c_2})^6 }$ of $t_{\delta}$ in $\Gamma_2^1$. Let $(X_4,f_4)$ be the corresponding genus-$2$ Lefschetz fibration of type $(18,1)$. As we already had $\pi_1(X_2) = \pi_1(\Sigma_2) / N (B_0, B_1, B_2, c) = \Z^2$ generated by $a_1, b_1$, adding the new relations induced by $c_1, c_2$ above gives $\pi_1(X_4) \cong 1$.

The last positive factorization is well-known: for $c_1, \ldots, c_5$  the curves given in Figure~1, one gets a lift  to $\Gamma_2^1$  of 
a positive factorization of the hyperelliptic involution on $\Sigma_2$, which can be then squared to get
\[  
(t_{c_1} t_{c_2} t_{c_3} t_{c_4} t_{c_5} t_{c_5} t_{c_4} t_{c_3} t_{c_2} t_{c_1})^2 =t_{\delta}  \, .
\]
Taking this positive factorization $\underline{ W_5 = (t_{c_1} t_{c_2} t_{c_3} t_{c_4} t_{c_5} t_{c_5} t_{c_4} t_{c_3} t_{c_2} t_{c_1})^2 }$, we arrive at a genus-$2$ Lefschetz fibration $(X_5, f_5)$ of type $(20,0)$. Since the vanishing cycles $c_1,c_2,c_3,c_4,c_5$ represent the free homotopy classes of 
$b_1, a_1,\bar{b}_1  a_2b_2 \bar{a}_2, a_2, b_2$ respectively, 
we easily see that $\pi_1(X_5) = 1$.

\smallskip
\subsection{Diffeomorphism types of some small genus-$2$ Lefschetz fibrations}  \
%==============================================================================

We now give a  sufficient condition for the total space of a genus-$2$ Lefschetz fibration $(X,f)$ to be a rational or ruled surface, which allows one to easily identify the \textit{diffeomorphism} type of  $X$. The proof of the next lemma generalizes a circle of ideas employed by Sato in \cite{SatoGeography}. 

\begin{lemma}\label{rationalruled}
Let $(X,f)$ be a genus-$2$ Lefschetz fibration of type $(n,s)$. If $n+7s < 30$, then $X \cong (S^2 \x \Sigma_h)  \# \, k \CPb$, where $h=2- \frac{1}{10}(n+2s)$ and $k= -\frac{1}{5}(3n+s)$.
\end{lemma}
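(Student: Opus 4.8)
The plan is to translate the hypothesis into numerical invariants, reduce the statement to the single assertion that $X$ is rational or ruled, and then read off the precise diffeomorphism type. By Lemma~\ref{charnumbers}, the inequality $n+7s<30$ is exactly $c_1^2(X)=\tfrac{1}{5}(n+7s)-8<-2<0$, while $\chi_h(X)=\tfrac{1}{10}(n+2s)-1$ and $\sigma(X)=-\tfrac{1}{5}(3n+s)$. Feeding in Lemma~\ref{constraints} — in particular $n+2s\equiv 0\ (\mathrm{mod}\ 10)$ and $n+7s\ge 20$ — forces $n+7s\in\{20,25\}$ and $n+2s\in\{10,20\}$, so $X$ is a symplectic $4$-manifold with $c_1^2(X)<0$ and $\chi_h(X)\in\{0,1\}$. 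Since every rational or ruled surface has $b^+=1$ and is determined up to diffeomorphism by $(\eu,\sigma)$ together with its ruling base (equivalently $\pi_1$), the whole statement reduces to showing that $X$ has symplectic Kodaira dimension $-\infty$.

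Proving $\kappa(X)=-\infty$ is the heart of the argument and where I expect the difficulty to concentrate. One cannot simply quote Liu's theorem that a minimal symplectic $4$-manifold with $c_1^2<0$ is an $S^2$-bundle over a surface of genus $\ge 2$: the minimal model of $X$ will have $c_1^2\ge 0$ (it is $\CP$ or an $S^2$-bundle), so that theorem does not apply and $c_1^2(X)<0$ alone is insufficient. Worse, the classical invariants cannot decide the issue — for type $(18,1)$ a three-fold blow-up of a Dolgachev surface is simply connected and homeomorphic to the rational surface the lemma predicts — so gauge-theoretic input is unavoidable. My main route would exploit the phenomenon special to $g=2$, namely that $\Gamma_2$ is hyperelliptic: the central hyperelliptic involution acts fiberwise and extends to a symplectic involution of $X$ whose quotient is a rational ruled surface $Z\to S^2$, presenting $X$ as a blow-up of the double cover of $Z$ branched along a curve $B$ whose degree and singularities are dictated by $n$ and $s$. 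The bound $n+7s<30$ keeps the invariants of $B$ below the general-type threshold and forces the cover to have $\kappa=-\infty$; this is the circle of ideas of Sato that the lemma cites. In parallel I would run the dual argument of excluding $\kappa(X)\ge 0$: if $X$ were not rational or ruled, its minimal model would carry $\pm K$ as a Seiberg--Witten basic class by Taubes, and following the change of $c_1^2$ and $b^-$ under blow-down against the short list of minimal symplectic $4$-manifolds with $\chi_h\le 1$ would contradict the Seiberg--Witten invariants forced by the fibration. The genuine obstacle, in either approach, is this gauge-theoretic exclusion of positive-Kodaira-dimension models that are numerically indistinguishable from $X$.

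Once $X$ is known to be rational or ruled, the identification is bookkeeping. Then $X$ is a blow-up of $\CP$ or of an $S^2$-bundle over some $\Sigma_h$, and since $\chi_h$ is a blow-up invariant equal to $1-h$ for a ruled surface (and $1$ for $\CP$), the relation $\chi_h(X)=\tfrac{1}{10}(n+2s)-1$ yields the base genus $h=2-\tfrac{1}{10}(n+2s)\in\{0,1\}$. The number of blown-up points is recovered from the signature via $\sigma(X)=-k$, i.e. $k=-\sigma(X)=\tfrac{1}{5}(3n+s)$, and one checks $\eu(X)=n+s-4$ is consistent with $\eu\big((S^2\times\Sigma_h)\#\,k\,\CPb\big)=4-4h+k$. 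Because $k\ge 3>0$ in every case, the twisted and untwisted $S^2$-bundles over $\Sigma_h$ become diffeomorphic after absorbing a single exceptional sphere (and a blow-up of $\CP$ is absorbed the same way), so uniformly $X\cong (S^2\times\Sigma_h)\#\,k\,\CPb$; the fundamental group is then automatically $\pi_1(\Sigma_h)$, in agreement with the explicit fibrations $W_1,\dots,W_5$.
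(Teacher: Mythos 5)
Your reduction of the lemma to the single claim that $X$ is rational or ruled, and your endgame recovering $h$ and $k$ from $\chi_h(X)$ and $\sigma(X)$, both match the paper. But the central step is not proved: you explicitly leave the exclusion of positive-Kodaira-dimension models as ``the genuine obstacle,'' and neither of your two proposed routes (the hyperelliptic branched double cover analysis, or tracking Seiberg--Witten basic classes through blow-downs) is actually carried out. That is a real gap, and the idea you are missing is precisely what makes the paper's proof short. You correctly observe that $c_1^2(X)<0$ alone proves nothing because $X$ is not minimal --- but the fix is not to abandon the Taubes/Li--Liu route; it is to bound the number of blow-ups. Write $X\cong X_0\#\,m\,\CPb$ with $X_0$ minimal and suppose $X$ is not rational or ruled. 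Then $c_1^2(X_0)=c_1^2(X)+m\geq 0$ by Taubes and Li--Liu. The key input from the fibration is that the $m$ disjoint exceptional spheres in a relatively minimal genus-$g$ Lefschetz fibration are positive multisections (Sato, Baykur--Hayano), so
\[
m \;\leq\; \sum_i S_i\cdot F \;\leq\; 2g-2 \;=\;2 .
\]
Since $n+7s<30$ gives $c_1^2(X)<-2$, this forces $c_1^2(X_0)<-2+2=0$, a contradiction. No branched-cover geometry or basic-class bookkeeping is needed; the only gauge theory used is the statement that a minimal symplectic $4$-manifold that is not rational or ruled has $c_1^2\geq 0$, applied to the minimal model, with the fibration entering solely through the bound $m\leq 2$.

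A secondary remark on your final paragraph: your identification of the diffeomorphism type once $X$ is known to be rational or ruled is essentially the paper's (via $h=1-\chi_h(X)$ and $k=-\sigma(X)$), and your observation that $k\geq 3>0$ lets one absorb the twisted bundle and $\CP$ cases into the uniform model $(S^2\times\Sigma_h)\#\,k\,\CPb$ is the same as the paper's remark that $\sigma(X)<0$ rules out the minimal rational and ruled surfaces and $\CP\#\CPb$. That part is fine; only the middle of your argument needs to be replaced by the multisection bound above.
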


\begin{proof}
Let $(X,f)$ be a genus-$2$ Lefschetz fibration satisfying the conditions of the lemma. To give our proof by contraposition, assume that $X$ is not a rational or a ruled surface.

Let $X_0$ be a minimal model for $X$, so $X \cong X_0 \# \, m \CPb$ for some non-negative integer $m$. It is easily seen that $c_1^2(X_{0})=c_1^2(X)+m$. By Taubes \cite{Taubes, Taubes2} and Li-Liu \cite{LiLiu}, any minimal symplectic $4$-manifold which is not rational or ruled has a non-negative $c_1^2$. It follows that
\[ 0 \leq  c_1^2(X) +m = \frac{1}{5}(n+7s) -8 + m \, . \]
On the other hand, there exist $m$ disjoint exceptional spheres $S_1, \ldots, S_m$ in $X$, each one of which is a positive multisection of $(X,f)$ (see \cite{SatoGeography, BaykurHayano} or for the most explicit discussion \cite{BaykurPAMS}[Proof of Theorem~1]), and we have 
\[m \leq \sum_i S_i \cdot F \leq 2g-2 = 2 \, . \]
Combining the two inequalities we obtain $0 \leq \frac{1}{5}(n+7s) -8 + 2$,  that is, $n+7s \geq 30$. 

By contraposition, we conclude that $n + 7s < 30$ implies $X$ is rational or ruled, so it is diffeomorphic to either $\CP$, an $S^2$-bundle over a Riemann surface $\Sigma_h$, or a blow-up of these. Note that $X$ cannot be minimal: minimal rational or ruled surfaces have signature $0$ or $1$, whereas for any \textit{nontrivial} genus-$2$ Lefschetz fibration (which we always assume to be the case) we have $\sigma(X)= -\frac{1}{5}(3n+s)<0$. The same argument rules out $\CP \# \CPb$. Therefore the diffeomorphism type of $X$ can be uniquely identified as $(S^2 \x \Sigma_h)  \# \, k \CPb$ for some non-negative integers $h$ and $ k$. These integers can be easily calculated as $h=1-\chi_h(X)$ and $k= -\sigma(X)$, and from Lemma~\ref{charnumbers} we arrive at the given calculations of $h$ and $k$ in terms of $n$ and $s$.

\end{proof}

\begin{comment}
The converse of this lemma is probably true. It requires too much work for too little though.
\end{comment}

\smallskip
With the constraints from Lemma~\ref{constraints}, there are very few possibilities for a genus-$2$ Lefschetz fibration of type $(n,s)$ with $n+7s < 30$. Namely, $(4,3), (6,2)$, $(18,1)$ and $(20,0)$.  The lemma above nevertheless makes it possible to identify the diffeomorphism types of the total spaces of these small genus-$2$ Lefschetz fibrations given earlier in Section~\ref{smallLFs}: 

\begin{proposition} \label{difftypes}
Any genus-$2$ Lefschetz fibrations of types $(4,3), (6,2), (18,1)$ and $ (20,0)$ have total spaces $(S^2 \x T^2)  \, \# 3 \CPb$, $(S^2 \x T^2)  \, \# 4 \CPb$, $\CP  \, \#  12 \CPb$ and $\CP  \, \#  13 \CPb$, respectively, and they are indecomposable. In particular the Lefschetz fibrations $(X_1, f_1)$, $(X_2, f_2)$, $(X_4, f_4)$, $(X_5, f_5)$ have total spaces diffeomorphic to these ruled manifolds, in the given order. 
\end{proposition}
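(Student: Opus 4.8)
The plan is to combine the diffeomorphism-type identification from Lemma~\ref{rationalruled} with the minimality criterion established earlier to handle both claims of the proposition simultaneously. First I would verify that each of the four types $(4,3),(6,2),(18,1),(20,0)$ satisfies the hypothesis $n+7s<30$ of Lemma~\ref{rationalruled}: indeed $4+21=25$, $6+14=20$, $18+7=25$, and $20+0=20$, all strictly less than $30$. With the hypothesis met, the lemma immediately yields that each total space is diffeomorphic to $(S^2 \x \Sigma_h) \, \# \, k \CPb$ with $h=2-\tfrac{1}{10}(n+2s)$ and $k=-\tfrac{1}{5}(3n+s)$. It then remains only to substitute the four pairs $(n,s)$ into these formulas.

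Carrying out the substitutions, for $(4,3)$ we get $h=2-\tfrac{1}{10}(10)=1$ and $k=-\tfrac{1}{5}(15)=-3$, hence $(S^2 \x T^2)\,\#\,3\CPb$; for $(6,2)$ we get $h=2-\tfrac{1}{10}(10)=1$ and $k=-\tfrac{1}{5}(20)=-4$, hence $(S^2\x T^2)\,\#\,4\CPb$. For $(18,1)$ we get $h=2-\tfrac{1}{10}(20)=0$ and $k=-\tfrac{1}{5}(55)=-11$, which at first reads as $(S^2\x S^2)\,\#\,11\CPb$; here I would invoke the standard fact that $(S^2\x S^2)\,\#\,\CPb \cong \CP \# 2\CPb$, so that $(S^2\x S^2)\,\#\,11\CPb \cong \CP\#12\CPb$. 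Similarly for $(20,0)$ we get $h=0$ and $k=-12$, giving $(S^2\x S^2)\,\#\,12\CPb \cong \CP\#13\CPb$. The constant-$h$ cases, being $S^2$-bundles over $S^2$, must be rewritten as blow-ups of $\CP$ to match the stated conclusions, and this is the one spot where a small nontrivial input (the blow-up relation between the two $S^2$-bundles over $S^2$) is needed rather than pure arithmetic.

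For the indecomposability claim I would appeal to the remark recorded in the Lefschetz-fibration preliminaries: non-minimal Lefschetz fibrations of positive genera are indecomposable by Usher \cite{Usher} (or \cite{BaykurPAMS}). Each of these four total spaces is a blow-up of a rational or ruled surface and hence manifestly \emph{not} minimal in the sense that it contains exceptional spheres; since the fibrations have fiber genus $2>0$, non-minimality forces indecomposability. Thus each of the four fibration types is indecomposable.

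Finally, to conclude the sentence about $(X_1,f_1)$, $(X_2,f_2)$, $(X_4,f_4)$, $(X_5,f_5)$, I would simply match these explicit fibrations to their types as already recorded in Section~\ref{smallLFs}: $(X_1,f_1)$ is of type $(4,3)$ (Theorem~\ref{thesmallest}), $(X_2,f_2)$ of type $(6,2)$ (Matsumoto's fibration), $(X_4,f_4)$ of type $(18,1)$, and $(X_5,f_5)$ of type $(20,0)$. Applying the diffeomorphism-type identification just established to each then gives the total spaces in the stated order. I expect no genuine obstacle here: the entire argument is a direct application of Lemma~\ref{rationalruled} plus the Usher indecomposability fact, and the only step requiring care is the bookkeeping that converts the $h=0$ cases from $S^2$-bundle language into the blow-up-of-$\CP$ form quoted in the statement.
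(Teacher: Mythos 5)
Your proposal is correct and is exactly the argument the paper intends: the paper states Proposition~\ref{difftypes} as an immediate consequence of Lemma~\ref{rationalruled} (plugging the four pairs $(n,s)$ into the formulas for $h$ and $k$, with the $h=0$ cases rewritten via $(S^2\x S^2)\,\#\,\CPb \cong \CP\#2\CPb$) together with the indecomposability of non-minimal positive-genus fibrations from \cite{Usher, BaykurPAMS}, and the non-minimality is already noted inside the proof of Lemma~\ref{rationalruled} via the signature argument. The only cosmetic point is the sign of $k$ (the lemma's displayed formula carries a stray minus sign relative to $k=-\sigma(X)=\tfrac{1}{5}(3n+s)$), which you resolve the same way the paper does.
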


\smallskip
\begin{remark}
Although this proposition gives a unified answer, we would like to point out that partial results were already known: total spaces of the particular fibrations $(X_2,f_2)$ and $(X_5,f_5)$ were already known to Matsumoto, who constructed them as double branched covers of rational ruled surfaces \cite{Matsumoto}, and the case of a potential $(4,3)$ fibration was covered by Sato in \cite{SatoGeography} (also see \cite{ParkJY}). 
\end{remark}

\begin{remark} \label{(16,2)}
A genus-$2$ Lefschetz fibration of type $(16,2)$ presents a borderline case for our lemma above, for which we can stretch our arguments to  identify the \emph{homeomorphism} class of $X_3$, the total space of the particular fibration $(X_3, f_3)$ we have constructed earlier. In this case, from  $c_1^2(X_3)=-2$, we get $m \geq 2$. By \cite{SatoKodaira, BaykurHayano}, we conclude that $X_3$ is either twice blow-up of a symplectic Calabi-Yau surface, or it is a rational or a ruled surface. As $\pi_1(X_3)= \Z_2$, the latter is ruled out. As in \cite{BaykurHayano}[Proof of Proposition~4.8], one can then conclude that $X_3$ is homeomorphic to the Enriques surface blown-up twice.

\end{remark}

%========================================
\section{Geography of small simply-connected genus-$2$ Lefschetz fibrations} \label{geography}
%===========================================================

Here we will determine which simply-connected $4$-manifolds with $b^+ \leq 3$ can admit minimal genus-$2$ Lefschetz fibrations. 

\begin{lemma} \label{atmost30}
For $n+2s \leq 30$, only the following pairs $(n,s)$ can be realized as \linebreak a \emph{minimal} genus-$2$ Lefschetz fibration of type $(n,s)$:
\begin{itemize}
\item  $(6,7), (8,6), (10,5), (12,4)$, and   \
\item  $(8,11), (10,10), (12,9), (14,8), (16,7), (18,6), (20,5), (22,4), (24,3), (26,2)$.
\end{itemize}
\end{lemma}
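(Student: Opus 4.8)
The plan is to convert the existence question into a finite arithmetic problem governed by the invariants of Lemma~\ref{charnumbers} and the necessary conditions of Lemma~\ref{constraints}, sharpened by the minimality hypothesis. The natural starting point is the congruence $n+2s \equiv 0 \pmod{10}$ combined with the hypothesis $n+2s \leq 30$, which leaves only the three possibilities $n+2s \in \{10,20,30\}$. Since every genus-$2$ Lefschetz fibration carries at least one nonseparating vanishing cycle, I also have $n \geq 1$, so for each fixed value of $n+2s$ there are only finitely many candidate pairs $(n,s)$ to test against the remaining constraints.

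The crucial new input is to upgrade the bound $n+7s \geq 20$ of Lemma~\ref{constraints} to $n+7s \geq 40$ under the minimality assumption; by the formula $c_1^2(X)=\tfrac{1}{5}(n+7s)-8$ this is exactly the statement $c_1^2(X) \geq 0$. I would argue this just as in the proof of Lemma~\ref{rationalruled}: if $c_1^2(X)<0$, then by Taubes and Li-Liu a minimal symplectic $X$ must be rational or ruled, hence a \emph{minimal} rational or ruled surface, which has $\sigma \geq 0$; but any nontrivial genus-$2$ Lefschetz fibration satisfies $\sigma(X)=-\tfrac{1}{5}(3n+s)<0$ because $n \geq 1$. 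This contradiction forces $c_1^2(X) \geq 0$, i.e. $n+7s \geq 40$. I expect this to be the heart of the argument and the main obstacle, since it is the only place where the full minimality hypothesis (rather than mere relative minimality) is used and where the gauge-theoretic input enters; everything else is bookkeeping.

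It then remains to intersect the conditions $n+2s \in \{10,20,30\}$, $2n-s \geq 3$, and $n+7s \geq 40$ with $n \geq 1$ and $s \geq 0$. For $n+2s=10$, the bound $n+7s \geq 40$ becomes $s \geq 6$, which is incompatible with $n=10-2s \geq 1$, so no pairs survive. For $n+2s=20$, substituting $n=20-2s$ turns $n+7s \geq 40$ into $s \geq 4$ and $2n-s \geq 3$ into $s \leq 7$, leaving exactly $(12,4),(10,5),(8,6),(6,7)$; here the second inequality is what discards the otherwise admissible $(4,8)$ and $(2,9)$. For $n+2s=30$, the same substitution gives $s \geq 2$ from $c_1^2(X) \geq 0$ and $s \leq 11$ from $2n-s \geq 3$, yielding $(26,2),(24,3),(22,4),(20,5),(18,6),(16,7),(14,8),(12,9),(10,10),(8,11)$. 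Assembling the two surviving families produces precisely the two lists in the statement, and the arithmetic is entirely routine once the sharpened inequality $n+7s \geq 40$ is established.
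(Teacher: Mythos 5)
Your proposal is correct and follows essentially the same route as the paper: the minimality hypothesis upgrades $n+7s\geq 20$ to $n+7s\geq 40$ via $c_1^2(X)\geq 0$, and the rest is the same finite check of the constraints from Lemma~\ref{constraints} over $n+2s\in\{10,20,30\}$. If anything, your justification of $c_1^2(X)\geq 0$ is more careful than the paper's bare citation of Taubes, since for $b^+=1$ one really does need the Li--Liu alternative together with the observation that a nontrivial genus-$2$ fibration has $\sigma(X)<0$ and so cannot be a minimal rational or ruled surface --- exactly the argument the paper itself deploys in the proof of Lemma~\ref{rationalruled}.
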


\begin{proof}
Let $(X,f)$ be a minimal genus-$2$ Lefschetz fibration of type $(n,s)$. Since $X$ should have at least one nonseparating vanishing cycle \cite{SmithHodge}, we will assume $n>0$. 

Since it is a minimal symplectic $4$-manifold, we have $ 0 \leq c_1^2(X)= \frac{1}{5}(n+7s) - 8$ \cite{Taubes, Taubes2}, which simplifies to
 \begin{equation} \label{minimalityinequality}
n+7s \geq 40
\end{equation}
Thus we have a strengthened version of the third constraint in Lemma~\ref{constraints}. 

Our proof will follow from a routine check to see which possible pairs $(n,s)$ meet the constraints of Lemma~\ref{constraints}, along with the strengthened inequality above. 

Clearly, no $(n,s)$ can satisfy the inequality $(\ref{minimalityinequality})$ when  $n+2s=10$.

For $n+2s=20$, the pairs $(2,9), (4,8)$ are eliminated using the second inequality in Lemma~\ref{constraints}, and $(14,3), (16,2), (18,1), (20,0)$ by $(\ref{minimalityinequality})$. 

Lastly, for $n+2s=30$, the pairs $(2,14), (4,13), (6,12)$ are ruled out by the second inequality in Lemma~\ref{constraints}, and $(28,1), (30,0)$ by $(\ref{minimalityinequality})$.
\end{proof}

We are now ready to determine all possible homeomorphism types of small \textit{simply-connected} minimal genus-$2$ Lefschetz fibrations: 

\begin{theorem}\label{scgeography}
Any \emph{simply-connected} minimal genus-$2$ Lefschetz fibration $(X,f)$ with $b^+(X) \leq 3$ is homeomorphic to $\CP \# \, p \, \CPb$ for some $7 \leq p \leq 9$ or to $3 \, \CP \# \, q \, \CPb$ for $11 \leq q \leq  19$.
\end{theorem}

\begin{proof} 
Let $(X,f)$ be a simply-connected \emph{minimal} genus-$2$ Lefschetz fibration with $b^+(X) \leq 3$. Since $X$ admits an almost complex structure and $b_1(X)=0$, $b^+(X)$ should be odd (e.g. $\chi_h(X)$ should be an integer). So $b^+(X)=1$ or $3$.

From the equation~(\ref{b1}), $b^-(X) = \frac{1}{5}(4n+3s) - 3 +b_1(X)=\frac{1}{5}(4n+3s) - 3 $. Combining it with $b^+(X)-b^-(X)= \sigma(X) = -\frac{1}{5}(3n+s)$, we get 
\begin{equation}\label{b^+}
b^+(X)= \frac{1}{5}(n+2s)  - 3.
\end{equation}
So we are in the ballpark of Lemma~\ref{atmost30}, since $n+2s \leq 30$ for $b^+(X) \leq 3$.

For $b_1(X)=0$, the second constraint in Lemma~\ref{constraints} can be improved as follows: The inequality $(\ref{b1})$ now reads $b^-(X) = \frac{1}{5}(4n+3s) - 3$, so with the inequality $(\ref{b^-})$, which states $b^-(X) \geq s+1$, we arrive at $2n-s \geq 10$. This last inequality then rules out a couple more $(n,s)$ pairs allowed by Lemma~\ref{atmost30}; namely $(6,7)$ and $(8,11)$. 

We are thus left with the possible values $(8,6), (10,5), (12,4)$ when $b^+=1$ and $(10,10), (12,9), (14,8), (16,7), (18,6), (20,5), (22,4), (24,3), (26,2)$  when $b^+=3$. In all these cases $s>0$, so the presence of a reducible fiber implies that there exists a homology class with an odd intersection number. It follows that $X$ cannot be spin. As $X$ is assumed to  be simply-connected, using Freedman's seminal work, we can conclude that $X$ is homeomorphic to $\CP \# \, p \, \CPb$ or to $3 \, \CP \# \, q \, \CPb$. Calculating $b^-(X)$ for the above list of pairs $(n,s)$, we see that  $7 \leq p \leq 9$ and $11 \leq q \leq  19$. 
\end{proof}

\smallskip

When the number of singular fibers $l=n+s \leq 30$, we obviously have \linebreak $n+2s \leq 30$, so our take of ``small'' minimal genus-$2$ Lefschetz fibrations are indeed limited to the types in Lemma~\ref{atmost30}, and in the simply-connected case, to the homeomorphism classes listed in Theorem~\ref{scgeography}. Using Lemma~\ref{charnumbers}, we can determine the possible characteristic numbers $c_1^2$ and $\chi_h$ of all minimal simply-connected genus-$2$ Lefschetz fibrations with at most $30$ singular fibers, as in the \emph{geography problem} for minimal simply-connected symplectic $4$-manifolds or complex surfaces. We will realize these lattice points in the next section.

\begin{remark} 
What if we consider the geography problem for small \emph{simply-connected} genus-$2$ Lefschetz fibrations which are asked to be only relatively-minimal? We can give a fairly complete answer to this question as well. The possible pairs $(n,s)$ ruled out in the proof of Lemma~\ref{atmost30} using minimality were the followings: any $(n,s)$ with $n+2s=10$, $(14,3), (16,2), (18,1), (20,0)$, and $(28,1), (30,0)$. 

For $n+2s=10$, the pairs $(2,4)$, $(8,1)$ and $(10,0)$ are all ruled out by Lemma~\ref{constraints}. Proposition~\ref{difftypes} shows that any genus-$2$ Lefschetz fibration of type $(4,3)$ or $ (6,2)$ has a non-simply-connected total space. 

The pairs $(18,1)$, $(20,0)$ are readily realized by the fibrations $(X_4, f_4)$ and $(X_5,f_5)$ discussed in the previous section. The $5$-chain relation
\[\left( t_{c_1}t_{c_2}t_{c_3}t_{c_4} t_{c_{5}} \right)^{6}=1 \]
in $\Gamma_2$ hands a genus-$2$ Lefschetz fibration of type $(30,0)$. Applying braid relations we can rewrite the left-hand side of the above equation in the form of $t_{c_1}^2 t_{c_3}^2 P $, where $P$ is a product of $26$ positive Dehn twists. Using the lantern relation so as to trade the subword $t_{c_1}^2 t_{c_3}^2$ with a product of three Dehn twists, we obtain a new positive factorization of the identity in $\Gamma_2 $, which yields a genus-$2$ Lefschetz fibration of type $(28,1)$. (One can in fact find five more similar subwords and iterate this procedure; see \cite{AkhmedovParkJY}.) Similarly, applying lantern relations to the positive word $W_5$, Endo and Gurtas found the fibrations of the type $(14,3)$  $(16,2)$ \cite{EndoGurtas}. All other pairs, except for $(10,10)$, are realized by the fibrations constructed in the next section.
\end{remark}

\enlargethispage{0.2in}
\begin{remark} \label{nonminimalremark}
Going back to the main theme of our article, we can ask if there is a \emph{nonminimal} genus-$2$ Lefschetz fibration $(X, f)$ where $X$ is an exotic  $\CP \# \, p \, \CPb$ or $3 \, \CP \# \, q \, \CPb$. Let $(X,f)$ be such a fibration of type $(n,s)$, which we can assume to be relatively-minimal after blowing-down any exceptional spheres on the fibers. Let $\kappa(X)$ denote the symplectic Kodaira dimension of $X$. Clearly, $X$ is not a rational or a ruled surface, so there are three possibilities by the work of Sato: (i) $\kappa(X)=0$, $c_1^2(X)=-1$ or $-2$; (ii) $\kappa(X)=1$, $c_1^2(X)=-1$ and $s=1$; (iii) $\kappa(X)=2$ , $c_1^2= -1$ and $s=0$ \, \cite{SatoKodaira}[Theorems 5.5. and 5.12]. We observe that (iii) is not possible, since the only possible types $(20,0)$ and $(30,0)$ have $c_1^2\neq-1$. On the other hand (i) or (ii) are possible only if $X$ is an exotic $\K$ surface blown-up once or twice; the only possible types in this case being $(28,1)$ or $(30,0)$ by similar arguments. Hence one does not get any exotic $4$-manifolds via nonminimal genus-$2$ Lefschetz fibrations, except possibly for those whose minimal models are exotic $\K$ surfaces.
\end{remark}

\smallskip
% =================================================
\section{Small exotic $4$-manifolds from genus-$2$ fibrations} \label{sec:exotic}
%======================================================================
% ==========================================================================================================

Here we will construct minimal symplectic $4$-manifolds which are homeomorphic to small number of blow-ups of $\CP$ and $3\CPb$, starting with the most interesting examples: exotic symplectic rational surfaces fibered by genus-$2$ curves. 

\subsection{Exotic symplectic rational surfaces} \
%=======================================================

Using the positive factorizations $W_1$ and $W_2$  of $t_{\delta}$ in $\Gamma_2^1$ given  in Section~\ref{smallLFs} we obtain three positive factorizations of $t_{\delta}^2$: 
\begin{align*}
\widetilde{W}_1 &= W_1^{\phi_1} W_1 =  t_ {\phi_1(e)}     t_{\phi_1(x_1)}  t_{\phi_1(x_2)} t_{\phi_1(x_3)}   t_{\phi_1(d)}  t_{\phi_1(c)}  t_{\phi_1( x_4 )} t_ {e}     t_{x_1}  t_{x_2} t_{x_3}   t_{d}  t_{c}  t_{ x_4 }, \\
\widetilde{W}_2 &= W_1^{\phi_2} W_2 =  t_ {\phi_2(e)}     t_{\phi_2(x_1)}  t_{\phi_2(x_2)} t_{\phi_2(x_3)}   t_{\phi_2(d)}  t_{\phi_2(c)}  t_{\phi_2( x_4 )} (t_{B_0} t_{B_1} t_{B_2} t_{c})^2,  \\
\widetilde{W}_3 &= W_2^{\phi_3} W_2 = ( t_{\phi_3(B_0)} t_{\phi_3(B_1)} t_{\phi_3(B_2)} t_{\phi_3(c)} )^2 (t_{B_0} t_{B_1} t_{B_2} t_{c})^2 . 
\end{align*}

For each $i=1,2,3$, let $(\widetilde{X}_i, \widetilde{f}_i)$ be the Lefschetz fibration prescribed by the positive factorization  $\widetilde{W}_i$, so that we have genus-$2$ Lefschetz fibrations of types of $(8,6)$, $(10,5)$ and $(12,4)$. By Proposition~\ref{minimalityprop}, each one of them is minimal. Moreover, if we assume that $\widetilde{X}_i$ is simply-connected, we calculate $b^-(\widetilde{X}_i) =6+i$ using the equation~$(\ref{b1})$. As already argued in Theorem~\ref{scgeography}, $\widetilde{X}_i$ is then homeomorphic to $\CP \# (6+i) \CPb$, for $i=1, 2, 3$, but certainly not diffeomorphic to it, since the latter are nonminimal. Thus, it  remains to show that for appropriate choices of mapping classes $\phi_i$, we can indeed get $\pi_1(\widetilde{X}_i) =1$. 

Each $\widetilde{W}_i$ contains the non-conjugated word $W_j$, for $j=1$ or $2$. As shown in Section~\ref{smallLFs}, the relations induced by the vanishing cycles prescribed by either $W_j$, when added to the presentation $\pi_1(\Sigma_2) \cong \langle a_1, b_1, a_2, b_2 \, | \, [a_1, b_1][a_2,b_2]=1 \, \rangle$, already give an abelian group generated by $a_2$ and $b_2$. We obtain a presentation for $\pi_1(\widetilde{W}_i)$ by adding further relations induced by the vanishing cycles coming from the conjugated word $W_1^{\phi_1}$, $W_1^{\phi_2}$ or $W_2^{\phi_3}$, which clearly will be an abelian quotient of $\Z^2$. Therefore, it suffices to understand these additional relations in 
the first integral homology group $H_1(\Sigma_2) \cong \Z^4$, the generators of which we will identify with $a_1, b_1, a_2, b_2$. 

The upshot of the above discussion is that we can choose the desired $\phi_i$ based on its action on $H_1(\Sigma_2)$.  Using the Picard-Lefschetz formula we can easily calculate the effect of the Dehn twist $t_{a}$ on the homology class of a curve $b$ as
\[ 
[t_a^m(b)] = [b] + m ( a \cdot b) [ a] \, \in H_1(\Sigma_2) , 
\]
where $a \cdot b$ is the algebraic intersection number for the oriented curves $a$ and $b$.  We will express $\phi_i$ as products of Dehn twists to further simplify our calculations, and we will indeed show that a simple multitwist will work for all! 

We will take $\phi_i=  t^{-1}_{c_1} t_{c_4}$ for each $i=1, 2, 3$. 

Let us first consider $\widetilde{W}_1$. In this case, from the earlier relations~$(\ref{eqn:prestn1})-(\ref{eqn:prestn8})$, we see that the nonconjugated word $W_1$ induces the abelianized relations: 
\begin{equation} 
a_1 + 2a_2 + 4 b_2=0 \, \text{ and  } \, b_1 + b_2= 0 , 
\end{equation}
whereas $W_1^{\phi_1}$ induces the following abelianized relations: 
\begin{equation} \label{conjW1relations} 
 a_1 + b_1 +6 a_2 + 4 b_2=0 \, \text{ and  } \,  b_1 + a_2 +b_2 =0.
\end{equation}
The four relations together imply that $a_2=0$ and $b_2=-b_1=0$. Hence $\pi_1( \widetilde{X}_1) = 1$ for this choice of $\phi_1$. 

For $\widetilde{W}_2$ and $\widetilde{W}_3$, the nonconjugated word $W_2$ induces the abelianized relations:
\begin{equation}
a_1 + a_2= 0 \, \text{ and  } \, b_1 + b_2 =0 
\end{equation}
derived from the relations~$(\ref{eqn:prestn8})$ and $(\ref{eqn:prestn10})$. Now for $\widetilde{W}_2$, the abelianized relations above, induced by $W_2$, together with those induced by  $W_1^{\phi_2}= W_1^{\phi_1}$ given in $(\ref{conjW1relations})$ immediately imply $a_2=0$. In turn, $a_1=0$, so $b_2= b_1 + 2b_2= a_1 + b_1 + 4 a_2 + 2 b_2=0$. We therefore get $\pi_1( \widetilde{X}_2) = 1$ for the chosen  $\phi_2$. 

The calculation for $\widetilde{W}_3$ is even simpler. $W_2^{\phi_3}$ induces the following abelianized relations:
\begin{equation} \label{conjW2relations}
a_1+ b_1+a_2 = 0 \, \text{ and  } \, b_1 + a_2 +b_2 =0 , 
\end{equation}
which, together with those above induced by $W_2$, kill all the generators. Thus $\pi_1( \widetilde{X}_3) = 1$ for $\phi_3$ we have chosen. 

\smallskip
Hence, we have proved 
%that the positive factorizations $\widetilde{W}_i$, for the choice of $\phi_i$ above yield:

\begin{theorem}  \label{exoticCP} 
For each $i=1, 2, 3$, 
$(\widetilde{X}_i,\widetilde{f}_i)$ is a minimal, decomposable genus-$2$ Lefschetz fibration, whose total space is an exotic symplectic $\CP \# (6+i) \CPb$. 
\end{theorem}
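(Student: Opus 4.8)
The plan is to establish three things about each of the Lefschetz fibrations $(\widetilde{X}_i, \widetilde{f}_i)$ prescribed by $\widetilde{W}_i$: that it is a genuine genus-$2$ Lefschetz fibration of the claimed type, that it is minimal and decomposable, and that its total space is homeomorphic but not diffeomorphic to $\CP \# (6+i)\CPb$. Most of this machinery has already been assembled in the preceding sections, so the proof should largely be a matter of invoking the right results in the right order.

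First I would verify the type and the minimality/decomposability. Since $\widetilde{W}_i = (W')^{\phi_i} W''$ is a product of two genuine positive factorizations of $t_\delta$ (namely $W_1$ or $W_2$ in each factor), and boundary-parallel twists are central, $\widetilde{W}_i$ is itself a positive factorization of $t_\delta^2$ in $\Gamma_2^1$. Counting nonseparating and separating twists in each factor gives the types $(8,6)$, $(10,5)$, $(12,4)$ directly. By Proposition~\ref{minimalityprop}, each $(\widetilde{X}_i, \widetilde{f}_i)$ is minimal (and decomposable, being a nontrivial fiber sum of the two smaller fibrations). This part is essentially automatic.

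Next I would pin down the homeomorphism type. Using Lemma~\ref{charnumbers} with the type $(n,s)$, together with equation~(\ref{b1}) under the assumption $b_1(\widetilde{X}_i)=0$, I compute $b^-(\widetilde{X}_i) = 6+i$ and $b^+(\widetilde{X}_i)=1$, so $\eu$ and $\sigma$ match those of $\CP \# (6+i)\CPb$. Since each type has $s>0$, a reducible fiber produces a homology class of odd self-intersection, so $\widetilde{X}_i$ is non-spin; invoking Freedman's theorem (as in Theorem~\ref{scgeography}) then gives the homeomorphism to $\CP \# (6+i)\CPb$. That this is not a diffeomorphism follows because the standard $\CP \# (6+i)\CPb$ is nonminimal (rational), whereas $\widetilde{X}_i$ is minimal.

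The main obstacle is the remaining step: showing $\pi_1(\widetilde{X}_i)=1$ for a suitable choice of $\phi_i$, which is exactly what justifies the $b_1=0$ assumption used above. The strategy is to work in $H_1(\Sigma_2)\cong \Z^4$, exploiting that the unconjugated factor $W_j$ already forces the quotient to be an abelian group generated by $a_2, b_2$ (as computed in Section~\ref{smallLFs}), so only the abelianized relations from the conjugated factor need to be controlled. Choosing the multitwist $\phi_i = t_{c_1}^{-1} t_{c_4}$ and applying the Picard--Lefschetz formula to compute $[\phi_i(e)], [\phi_i(x_k)], [\phi_i(B_k)]$, the resulting relations (collected in~(\ref{conjW1relations}) and~(\ref{conjW2relations})) together with the relations from $W_j$ kill all generators. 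I expect the bookkeeping here — correctly tracking how $\phi_i$ acts on each vanishing cycle and verifying the combined relations trivialize $\Z^2$ — to be the delicate point, though the paper has already reduced it to a short linear-algebra check over $\Z$.
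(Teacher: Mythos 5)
Your proposal follows the paper's proof essentially step for step: the same appeal to Proposition~\ref{minimalityprop} for minimality and decomposability, the same computation of $b^{\pm}$ and appeal to the non-spin/Freedman argument of Theorem~\ref{scgeography}, and the same choice $\phi_i = t_{c_1}^{-1}t_{c_4}$ with the abelianized Picard--Lefschetz bookkeeping to kill $\pi_1$. The argument is correct and matches the paper's route.
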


\smallskip
\begin{remark}
The existence of minimal symplectic $4$-manifolds in the above homeomorphism classes was already established. As shown by Donaldson in his seminal paper \cite{Donaldson87}, for $p=9$ blow-ups of $\CP$, the Dolgachev surface $E(1)_{2,3}$ was the first example; for $p=8$, Kotschick proved that Barlow's surface was minimal \cite{Kotschick}; and for $p=7$, the first example was obtained by Jongil Park's breakthrough article utilizing rational blowdowns \cite{ParkJ}. As we mentioned in Introduction, examples of minimal, decomposable, genus-$2$ Lefschetz fibrations in the homeomorphism class  of $\CP \# 9 \CPb$ were obtained previously by Fintushel and Stern \cite{FSLF}.
\end{remark}

\begin{remark} \label{noncomplex} 
With a small variation of the above constructions as in \cite{OzbagciStipsicz,Korkmaz,BaykurHolomorphic}, we can  also produce minimal genus-$2$ Lefschetz fibration with any prescribed abelian group of rank at most $2$: Let $i=1,2,3,$ and let $m=(m_1,m_2)$, where $m_1$ and $m_2$ are nonnegative integers.
Let $(\widetilde{X}_{i,m},\widetilde{f}_{i,m})$ be the Lefschetz fibration with monodromy $\widetilde{W}_i$ with $\phi_i= t^{-m_1}_{c_1} t_{c_4}^{m_2} $. 
A very similar calculation shows that $\pi_1(\widetilde{X}_{i,m})$ has an abelian presentation
$\langle a_2, b_2 \, | \, m_1 b_2 =0 , \, m_2 a_2=0 \rangle$. 
Hence, we get symplectic $\widetilde{X}_{i,m}$, with $c_1^2(\widetilde{X}_{i,m})=3-i$ and $\chi_h(\widetilde{X}_{i,m})=1$, and any prescribed abelian fundamental group of rank at most $2$.\footnote{Another example with  $c_1^2=1$, $\chi_h=1$ and $\pi_1= \Z_3$ is given in a recent preprint of Akhmedov and Monden using the lantern substitution~\cite{AkhmedovMonden}. Akhmedov pointed out to us that they have now updated their arxiv paper to include some nonexplicit examples with $\pi_1 = \Z \oplus \Z_m$.} 

For each fixed $i=1, 2, 3$, infinitely many of these symplectic fibrations have total spaces which cannot be a complex surface. Here is a quick argument: by the Enrique-Kodaira classification, a complex surface with odd first betti number is either of type VII or elliptic. However, any minimal elliptic surface with odd betti number has Euler characteristic zero, whereas for any minimal type VII surface it is $2$. So any $\widetilde{X}_i$ with odd $b_1(\widetilde{X}_i)$ does not admit a complex structure, and we have infinitely many of them (with $\pi_1 \cong \Z \oplus \ (\Z / \, n \Z)$ for varying $n$).
\end{remark}

\begin{remark}
Very recently, Rana, Tevelev and Urz\'{u}a have given an elaborate proof of the simple-connectivity of the Craighero-Gattazzo  surface \cite{RanaTevelevUrzua}, which was shown earlier by Dolgachev and Werner to carry a holomorphic genus-$2$ Lefschetz fibration \cite{DolgachevWerner}. The authors also showed that the Dolgachev surface $E(1)_{2,3}$ admits a holomorphic genus-$2$ Lefschetz fibration. It would be certainly interesting to know whether these holomorphic fibrations are equivalent to ours, and if there is a holomorphic counterpart of our genus-$2$ Lefschetz fibration with $c_1^2=2$ as well. The same question goes for the genus-$2$ Lefschetz fibrations with $b^+=3$ we construct below. Note that by the work of Siebert and Tian \cite{SiebertTian} any genus-$2$ Lefschetz fibration without reducible fibers (satisfying a mild condition on its monodromy) is holomorphic, but as we observed in the previous section, \emph{every} minimal genus-$2$ Lefschetz fibrations with $b^+ \leq 3$ contains reducible fibers.
\end{remark}

\smallskip
\subsection{Exotic symplectic $4$-manifolds with $b^+=3$} \
%==========================================================================

We will now construct minimal symplectic genus-$2$ Lefschetz fibrations whose total spaces are homeomorphic to blow-ups of $3 \CP$. This time we will define new positive factorizations of  $t_{\delta}^2$  in $\Gamma_2^1$ using all of  $W_1, W_2, W_3, W_4, W_5$ given in Section~\ref{smallLFs}:

\begin{align*}
\widehat{W}_1 &= W_1^{\phi} W_1 W_1=  t_ {\phi(e)}     t_{\phi(x_1)}  t_{\phi(x_2)} t_{\phi(x_3)}   t_{\phi(d)}  t_{\phi(c)}  t_{\phi( x_4 )} (t_ {e}     t_{x_1}  t_{x_2} t_{x_3}   t_{d}  t_{c}  t_{ x_4 })^2 \\
\widehat{W}_2 &= W_1^{\phi} W_1  W_2 =t_ {\phi(e)}     t_{\phi(x_1)}  t_{\phi(x_2)} t_{\phi(x_3)}   t_{\phi(d)}  t_{\phi(c)}  t_{\phi( x_4 )} t_ {e}     t_{x_1}  t_{x_2} t_{x_3}   t_{d}  t_{c}  t_{ x_4 } (t_{B_0} t_{B_1} t_{B_2} t_{c})^2  \\
\widehat{W}_3&= W_2^{\phi} W_2 W_1 = ( t_{\phi(B_0)} t_{\phi(B_1)} t_{\phi(B_2)} t_{\phi(c)} )^2 (t_{B_0} t_{B_1} t_{B_2} t_{c})^2 t_ {e}     t_{x_1}  t_{x_2} t_{x_3}   t_{d}  t_{c}  t_{ x_4 } \\
\widehat{W}_4&= W_2^{\phi} W_2 W_2 = ( t_{\phi(B_0)} t_{\phi(B_1)} t_{\phi(B_2)} t_{\phi(c)} )^2 (t_{B_0} t_{B_1} t_{B_2} t_{c})^4  \\
\widehat{W}_5&= W_1^{\phi}  W_3 =  t_ {\phi(e)}     t_{\phi(x_1)}  t_{\phi(x_2)} t_{\phi(x_3)}   t_{\phi(d)}  t_{\phi(c)}  t_{\phi( x_4 )} t_ e     t_{x_1}  t_{x_2} t_{x_3}   t_d  (t_{c_1} t_{c_2})^6  t_{ x_4 }  \\
\widehat{W}_6&= W_4  W_1 =  t_{B_0} t_{B_1} t_{B_2} t_{c} t_{B_0} t_{B_1} t_{B_2} (t_{c_1} t_{c_2})^6  t_ {e}     t_{x_1}  t_{x_2} t_{x_3}   t_{d}  t_{c}  t_{ x_4 }  \\
\widehat{W}_7&= W_5  W_1 =  (t_{c_1} t_{c_2} t_{c_3} t_{c_4} t_{c_5} t_{c_5} t_{c_4} t_{c_3} t_{c_2} t_{c_1})^2  t_ {e}     t_{x_1}  t_{x_2} t_{x_3}   t_{d}  t_{c}  t_{ x_4 }  \\
\widehat{W}_8&= W_5  W_2 =  (t_{c_1} t_{c_2} t_{c_3} t_{c_4} t_{c_5} t_{c_5} t_{c_4} t_{c_3} t_{c_2} t_{c_1})^2  (t_{B_0} t_{B_1} t_{B_2} t_{c})^2 .
\end{align*}
For simplicity, we will use the same conjugation $\phi= t^{-1}_{c_1} t_{c_4}$ for $\widehat{W}_i$, $i=1, \ldots, 5$, and indeed \emph{no} conjugation for $i= 6, 7, 8$.

Letting $(\widehat{X}_i, \widehat{f}_i)$ be the Lefschetz fibration prescribed by the positive factorization  $\widehat{W}_i$ for $i=1, \ldots, 8$, we obtain genus-$2$ Lefschetz fibrations of the types $(12,9), (14,8), (16,7), (18,6), (20,5), (22,4), (24,3), (26,2)$. They are all minimal by Proposition~\ref{minimalityprop}, and once we show that  $\widehat{X}_i$ is simply-connected,  we can once use Theorem~\ref{scgeography} again to conclude that $\widehat{X}_i$ is an exotic $3 \CP \# (11+i) \CPb$, for $i=1, \ldots, 8$. 

To prove that $\pi_1(\widehat{X}_i)= 1$, observe that we already have relations induced by a \emph{subcollection} of the vanishing cycles, killing $\pi_1(\Sigma_2)$. This is the case for the subcollection coming from $W_1^{\phi} W_1$, $W_2^{\phi} W_2$, $W_4$ and $W_5$, as we have shown in the previous subsection for the first two words, and in Section~\ref{smallLFs} for the last two. Since the separating curve $c$ is disjoint from 
$c_1, c_4$, we have $t_{\phi(c)}= t_c$. So the vanishing curves in $W_1^{\phi} W_1$ are contained in the vanishing curves of $\widehat{W}_5$. Every  $\widehat{W}_i$ contains one of these subwords, and therefore the corresponding $\widehat{X}_i$ is simply-connected.

\smallskip
We have proved:

\begin{theorem} \label{exotic3CP}
For each $i=1, \ldots, 8$, $(\widehat{X}_i, \widehat{f}_i)$ is a minimal, decomposable genus-$2$ Lefschetz fibration, whose total space is an exotic symplectic $3\CP \# (11+i) \CPb$. 
\end{theorem}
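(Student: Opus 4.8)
The plan is to establish \textbf{Theorem~\ref{exotic3CP}} by verifying, for each of the eight positive factorizations $\widehat{W}_i$, the same three properties already organized in the statement of \textbf{Theorem~\ref{scgeography}} and \textbf{Proposition~\ref{minimalityprop}}: that the corresponding $\widehat{X}_i$ is (a) minimal, (b) simply-connected, and (c) of the prescribed type $(n,s)$, so that it is homeomorphic to $3\CP \# (11+i)\CPb$ but not diffeomorphic to it. The homeomorphism class and non-diffeomorphism then follow mechanically, just as in the proof of \textbf{Theorem~\ref{exoticCP}}, from the fact that the standard $3\CP \# (11+i)\CPb$ is nonminimal.

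First I would record the types. Each $\widehat{W}_i$ is assembled from the known factorizations $W_1$ (type $(4,3)$), $W_2$ (type $(6,2)$), $W_3$ (type $(16,2)$), $W_4$ (type $(18,1)$), $W_5$ (type $(20,0)$) by juxtaposition and conjugation, both of which preserve the multiset of nonseparating and separating curve-counts. A direct addition of the $(n,s)$-contributions shows $\widehat{W}_1,\ldots,\widehat{W}_8$ realize the eight types $(12,9),(14,8),(16,7),(18,6),(20,5),(22,4),(24,3),(26,2)$ listed in \textbf{Lemma~\ref{atmost30}}, and each has $b^+=3$ via equation~(\ref{b^+}). For minimality I would invoke \textbf{Proposition~\ref{minimalityprop}}: every $\widehat{W}_i$ is visibly of the form $(W')^{\phi}W''$ (or simply $W'W''$) where $W'$ and $W''$ are themselves genuine positive factorizations of $t_\delta$ or $t_\delta^{\,2}$ in $\Gamma_2^1$, so the total space is a nontrivial fiber sum and hence minimal.

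The substantive step is the fundamental group calculation, property (b). The strategy is to exploit the computations already carried out in Section~\ref{smallLFs} and in the proof of \textbf{Theorem~\ref{exoticCP}}: the vanishing cycles of any one of the subwords $W_1^{\phi}W_1$, $W_2^{\phi}W_2$, $W_4$, or $W_5$ already normally generate all of $\pi_1(\Sigma_2)$. For $W_1^{\phi}W_1$ and $W_2^{\phi}W_2$ this was shown in the preceding subsection (the four abelianized relations kill $a_2$, $b_2$ and hence everything), and for $W_4$ and $W_5$ this was established in Section~\ref{smallLFs} where $\pi_1(X_4)\cong 1$ and $\pi_1(X_5)\cong 1$. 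Since adding further relations only makes the group smaller, it suffices to check that each $\widehat{W}_i$ \emph{contains} one of these distinguished subcollections of curves. I would note the one subtlety the author flags: because $c$ is disjoint from $c_1,c_4$, we have $t_{\phi(c)}=t_c$, so the vanishing cycles of $W_1^{\phi}W_1$ sit inside those of $\widehat{W}_5$; the remaining cases are immediate by inspection. This bookkeeping — matching each $\widehat{W}_i$ to a subword whose cycles are already known to kill $\pi_1$ — is the main point, though it is routine rather than hard once the prior computations are in hand.

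Having assembled minimality, simple-connectivity, and the type data, \textbf{Theorem~\ref{scgeography}} applies directly to each $\widehat{X}_i$: a minimal simply-connected genus-$2$ Lefschetz fibration with $b^+=3$ of the given type is homeomorphic to $3\CP \# q\,\CPb$, and computing $b^-=q$ from equation~(\ref{b1}) with $b_1=0$ pins down $q=11+i$. Minimality forbids a diffeomorphism to the standard (nonminimal) $3\CP \# (11+i)\CPb$, so each $\widehat{X}_i$ is exotic, completing the proof. I expect no genuine obstacle here; the only thing requiring care is ensuring that the conjugation $\phi=t_{c_1}^{-1}t_{c_4}$ really does leave the relevant $\pi_1$-killing subcollections intact, which is exactly what the $t_{\phi(c)}=t_c$ observation secures.
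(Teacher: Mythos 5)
Your proposal is correct and follows essentially the same route as the paper: types and $b^+=3$ from the counting, minimality via Proposition~\ref{minimalityprop}, simple-connectivity by locating in each $\widehat{W}_i$ one of the $\pi_1$-killing subwords $W_1^{\phi}W_1$, $W_2^{\phi}W_2$, $W_4$, or $W_5$ (with the $t_{\phi(c)}=t_c$ observation handling $\widehat{W}_5$), and then Theorem~\ref{scgeography} plus nonminimality of the standard manifolds to conclude exoticness. No discrepancies with the paper's argument.
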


\smallskip
\begin{remark}
As we have noted in Introduction, we expect that there are also minimal, decomposable genus-$2$ Lefschetz fibrations in the homeomorphism class of $3\CP \# 11 \CPb$, which we have not succeeded in constructing yet. On the other hand, the existence of minimal symplectic $4$-manifolds in the  above homeomorphism classes were previously shown by other authors. The first examples for $q=19$ blow-ups of $3 \CP$ were given by Friedman and Morgan \cite{FriedmanMorgan} and Stipsicz and Szabo \cite{StipsiczSzabo}; for $ 14 \leq q \leq 18$ by Gompf, using symplectic fiber sums \cite{Gompf}; and for $q = 12, 13$ by Doug Park using the same technique  \cite{ParkD}.
\end{remark}

\vspace{0.1in}
The explicit construction of $\widehat{X}_i$ moreover  allows us to produce an infinite family of minimal symplectic $4$-manifolds in each homeomorphism class $3\CP \# (11+i) \CPb$, for $i=1, \ldots, 8$, which will follow from the following lemma:

\begin{lemma} 
Let $W, W', W''$ be positive factorizations (of the identity element) in $\Gamma_g$, with $W= W' W''$, 
\label{infinite} and let $X$ be the total space of the Lefschetz fibration prescribed by $W$ with \mbox{$b^+(X)>1$.} Assume that there is a nonseparating simple closed curve $\beta$ on $\Sigma_g$ which is trivial both in $\pi_1(\Sigma_g) / \, N'$ and in $\pi_1(\Sigma_g) / \, N''$, where $N'$ and $N''$ are the subgroups of $\pi_1(\Sigma_g)$ normally generated by the curves in positive factorizations $W'$ and $ W''$, respectively. Then there exist an infinite family of pairwise non-diffeomorphic minimal symplectic $4$-manifolds (resp. irreducible $4$-manifolds which do not admit any symplectic structures) all homeomorphic to $X$. 
\end{lemma}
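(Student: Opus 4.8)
The plan is to realize every member of the family as a Fintushel--Stern knot surgery on a single fibered Lagrangian torus coming from the splitting $W=W'W''$. First I would locate the torus. Since $W'$ and $W''$ are themselves positive factorizations of the identity, the fibration $(X,f)$ is the fiber sum of the two Lefschetz fibrations built from $W'$ and $W''$; equivalently, there is an embedded loop $\gamma\subset S^2$ separating the critical values coming from $W'$ from those coming from $W''$, and the monodromy around $\gamma$ is trivial. Hence $f^{-1}(\gamma)\cong\Sigma_g\times S^1$, and for the given nonseparating curve $\beta$ I set $T=\beta\times S^1\subset f^{-1}(\gamma)$, an embedded torus of self-intersection zero which is Lagrangian for a Gompf symplectic form (a fibered Lagrangian torus in the sense of Section~\ref{preliminaries}). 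By Proposition~\ref{minimalityprop} applied with trivial conjugation, $X$ is a minimal symplectic $4$-manifold, so Taubes' theorem \cite{Taubes,Taubes2} together with $b^+(X)>1$ gives $\SW_X\neq0$.

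Next I would run the surgery. For $K\subset S^3$ let $X_K$ be the result of knot surgery \cite{FSKnotsurgery} on $T$. Granting for the moment that $T$ is homologically essential, the product formula yields $\SW_{X_K}=\SW_X\cdot\Delta_{K}$, where $\Delta_K$ is the symmetrized Alexander polynomial evaluated under $t\mapsto\exp(2[T])$. To obtain the symplectic family I would take fibered knots $K_n$ with pairwise distinct Alexander polynomials (for instance the torus knots $T(2,2n+1)$): each $X_{K_n}$ is then symplectic \cite{FSKnotsurgery}, it is minimal because the surgery is supported near the essential torus $T$ and introduces no exceptional spheres, so minimality is preserved by \cite{Usher,BaykurPAMS}, and the $\SW$ sets are pairwise distinct, hence the $X_{K_n}$ are pairwise non-diffeomorphic. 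To obtain the non-symplectic family I would instead take non-fibered knots with pairwise distinct, non-monic Alexander polynomials (e.g.\ the twist knots): the leading coefficient of $\Delta_{K_n}$ is then not $\pm1$, so $\SW_{X_{K_n}}$ violates the constraint satisfied by symplectic $4$-manifolds with $b^+>1$ \cite{Taubes,Taubes2}, whence $X_{K_n}$ admits no symplectic structure; it is nonetheless simply connected with $\SW\neq0$ and therefore irreducible by the argument of \cite{FSKnotsurgery}.

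It remains to see that every $X_{K_n}$ is homeomorphic to $X$, and this is exactly where the hypothesis on $\beta$ enters. Knot surgery leaves $b^+$, $b^-$, the signature, and the spin type unchanged, so by Freedman's classification it suffices to check $\pi_1(X_{K_n})\cong\pi_1(X)$. Write $\gamma=\partial D_1=\partial D_2$ with $f^{-1}(D_1)$ carrying the $W'$ singular fibers and $f^{-1}(D_2)$ the $W''$ ones. The core circle $\lambda=\{pt\}\times S^1$ of $f^{-1}(\gamma)$ bounds a section disk over $D_1$, so it is null-homotopic in $X\setminus\nu T$. For $\beta$: each vanishing cycle of $W'$ bounds its Lefschetz thimble inside $f^{-1}(D_1)$, and these thimbles can be pushed toward the critical values, away from $f^{-1}(\gamma)$, hence kept disjoint from $T$. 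Since $\beta$ is trivial in $\pi_1(\Sigma_g)/N'$, writing it as a product of conjugate vanishing cycles of $W'$ exhibits a pushoff $\beta_+$ as null-homotopic in $f^{-1}(D_1)\setminus\nu T$, i.e.\ on one side of $T$; the identical argument using triviality of $\beta$ in $\pi_1(\Sigma_g)/N''$ shows the pushoff $\beta_-$ is null-homotopic on the other side. This is the step that genuinely consumes \emph{both} halves of the hypothesis, and it forces $\beta$ to be null-homotopic in $X\setminus\nu T$. Choosing the gluing so that the meridian of $K_n$ is identified with $\beta$ and the $S^1$-factor with $\lambda$, the knot group $\pi_1(S^3\setminus K_n)$, normally generated by its meridian, dies, and van Kampen gives $\pi_1(X_{K_n})\cong\pi_1(X\setminus\nu T)/\langle\langle\mu_T\rangle\rangle\cong\pi_1(X)$.

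The step I expect to be the main obstacle is the homological essentialness of $T$ invoked in the product formula. A rim torus $\beta\times S^1$ in a fiber sum can a priori be null-homologous, so one must either verify $[T]\neq0$ directly from the decomposition --- e.g.\ by building, out of a curve dual to the nonseparating $\beta$, a closed surface meeting $T$ once --- or, should $T$ be null-homologous, replace the ordinary formula by the relative Seiberg--Witten computation for surgeries on null-homologous tori. Determining which case occurs under the stated hypotheses, and making it compatible with the $\pi_1$ computation above (where the same torus must have null-homotopic pushoffs of $\beta$), is the delicate heart of the proof.
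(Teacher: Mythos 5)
Your overall strategy --- locate a square-zero torus fibered over the circle $\gamma$ that separates the $W'$-critical values from the $W''$-critical values, then apply Fintushel--Stern knot surgery with fibered (resp.\ non-fibered) knots of distinct Alexander polynomials --- is the strategy of the paper. But you sweep the torus out with the wrong curve, and the issue you defer at the end as ``the delicate heart of the proof'' is exactly where this choice breaks down. With $T=\beta\times S^1$ there is no reason for $[T]\neq 0$ in $H_2(X)$: a class detecting $[\beta\times S^1]$ must restrict on a nearby fiber to a $1$-cycle pairing nontrivially with $\beta$ while dying in $H_1$ of \emph{both} halves $f^{-1}(D_1)$, $f^{-1}(D_2)$, and the hypotheses of the lemma provide no such class. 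If $[T]=0$, the product formula $\SW_{X_K}=\SW_X\cdot\Delta_K(\exp(2[T]))$ degenerates (and does not apply in that form to null-homologous tori), so the members of your family cannot be told apart. As written, the proof is therefore incomplete at its central step.

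The paper's fix is to interchange the roles of $\beta$ and its dual. Choose a nonseparating $\alpha$ with $\alpha\cdot\beta=1$ and let $T$ be the parallel transport of $\alpha$ around $\gamma$; this closes up to a fibered Lagrangian torus because $W'$, being a factorization of the identity, fixes $\alpha$ as a mapping class. The hypothesis that $\beta$ dies in both $\pi_1(\Sigma_g)/N'$ and $\pi_1(\Sigma_g)/N''$ now yields two immersed disks bounded by $\beta$, one pushed into $f^{-1}(D_1)$ and one into $f^{-1}(D_2)$, hence an immersed sphere $S$ meeting $T$ transversely in the single point $\alpha\cap\beta$. This one sphere does both jobs at once: $T\cdot S\neq 0$ gives $[T]\neq 0$, so $T$ becomes symplectic after Gompf's perturbation \cite{Gompf} and the Seiberg--Witten comparison is nondegenerate; and $S$ punctured at the intersection point is an immersed disk in $X\setminus \nu T$ bounding a meridian of $T$, which (for the simply-connected $X$ to which the lemma is applied) gives $\pi_1(X\setminus T)=1$. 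These are precisely the hypotheses of the Fintushel--Stern theorem \cite{FSKnotsurgery}, after which the conclusion follows as you describe. A further small caution: your appeal to ``Freedman's classification'' to match homeomorphism types already presupposes $\pi_1(X)=1$, which is what the meridian-disk argument delivers; and once $T$ is chosen as above, your thimble-pushing discussion is replaced by the simpler observation that the two immersed disks lie entirely in the two halves, away from $f^{-1}(\gamma)\supset T$ except along the equator $\beta$.
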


\begin{proof}
The proof relies on the following illustrious theorem of Fintushel and Stern: if there exists an embedded symplectic torus $T$ in a simply-connected symplectic $4$-manifold $X$ with $b^+(X)>1$ such that  $[T]^2=0$ and $\pi_1(X \ \setminus T) =1$, then one can perform \emph{knot surgery} operation along fibered knots with different Alexander polynomials to produce an infinite family of pairwise non-diffeomorphic symplectic $4$-manifolds in the same homeomorphism class as $X$  \cite{FSKnotsurgery}. Using non-fibered knots with different Alexander polynomials, one can similarly produce an infinite family of irreducible $4$-manifolds none of which can admit symplectic structures. 

Now let $W, W', W''$ be positive factorizations in $\Gamma_g$, with $W= W' W''$, and $\beta$ be a nonseparating curve on $\Sigma_g$ which is trivial both in $\pi_1(\Sigma_g) / \, N'$ and in $\pi_1(\Sigma_g) / \, N''$, for $N', N''$. There exists a nonseparating curve $\alpha$ on $\Sigma_g$ intersecting $\beta$ at one point. Since $W'$, as a mapping class, stabilizes any nonseparating curve on $\Sigma_g^m$, we can take a parallel transport of $\alpha$ over a curve $\gamma$ enclosing the Lefschetz critical values corresponding to $W'$ to produce a Lagrangian torus $T$ fibered over $\gamma$ in the symplectic Lefschetz fibration $(X,f)$ constructed from $W$. It follows from our assumption on $\beta$ that in $X$, the torus $T$ intersects an immersed $2$-sphere $S$, which is composed of two immersed disks bounding the curve $\beta$ viewed on a regular fiber over $\gamma$. Thus, if only such $\beta$ exist, we get a Lagrangian torus $T$ in $X$ with $\pi_1(X \setminus T)=1$ (since by Seifert-Van Kampen, it is normally generated by the meridian $\mu$ of $T$, which in this case bounds an immersed disk) and $[T] \neq 0$ in $H_2(X)$ (since $T \cdot S \neq 0$). By Gompf's trick, any homologically essential Lagrangian becomes symplectic after a local perturbation of the symplectic form  \cite{Gompf}, which necessarily has square zero. This is the symplectic torus $T$ we need in order  to invoke Fintushel and Stern's theorem. 
\end{proof}

\begin{cor} \label{infinitecor}
There exists an infinite family of pairwise non-diffeomorphic minimal symplectic $4$-manifolds   (resp. irreducible $4$-manifolds which do not admit any symplectic structures) in each homeomorphism class $3\CP \# (11+i) \CPb$, for each $i=1, \ldots, 8$.
\end{cor}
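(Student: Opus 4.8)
The plan is to invoke Lemma~\ref{infinite} for each of the eight total spaces $\widehat{X}_i$ of Theorem~\ref{exotic3CP}. The hypotheses on $X=\widehat{X}_i$ are already in hand: each $\widehat{X}_i$ is homeomorphic to $3\CP \# (11+i)\CPb$, so $b^+(\widehat{X}_i)=3>1$, and each is simply-connected by the computation in Theorem~\ref{exotic3CP}. What remains, for every $i$, is to produce a splitting $\widehat{W}_i = W' W''$ into two positive factorizations of the identity in $\Gamma_2$ together with a nonseparating simple closed curve $\beta$ on $\Sigma_2$ that is trivial in both $\pi_1(\Sigma_2)/N'$ and $\pi_1(\Sigma_2)/N''$. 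Since each building block $W_1,\ldots,W_5$ and each conjugate $W_1^\phi$, $W_2^\phi$ equals $t_\delta$ in $\Gamma_2^1$ and hence caps off to a positive factorization of the identity in $\Gamma_2$, any grouping of the blocks appearing in $\widehat{W}_i$ into two consecutive products is an admissible splitting; the whole issue is the choice of $\beta$.

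For the seven cases $i \in \{1,2,3,4,6,7,8\}$ I would split off the leading \emph{simply-connecting} block as $W'$ --- namely $W_1^\phi W_1$ (for $i=1,2$), $W_2^\phi W_2$ (for $i=3,4$), $W_4$ (for $i=6$), or $W_5$ (for $i=7,8$) --- leaving $W''$ equal to the trailing block, either $W_1$ or $W_2$. As shown in this and the previous section, $\pi_1(\Sigma_2)/N'=1$ in each of these cases, so \emph{every} curve is trivial there, and it only remains to kill $\beta$ in $\pi_1(\Sigma_2)/N''$. From the recorded abelianized relations one checks that the class $[\beta]=b_1+b_2$ maps to zero in both $\pi_1(\Sigma_2)/N(W_1)\cong\Z^2$, where $b_1=-b_2$, and $\pi_1(\Sigma_2)/N(W_2)\cong\Z^2$, where $b_2=-b_1$; as these quotients are abelian, homological vanishing is equivalent to triviality in the group. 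Since $b_1+b_2$ is primitive, it is realized by a nonseparating simple closed curve, and Lemma~\ref{infinite} applies uniformly.

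The remaining case $\widehat{W}_5 = W_1^\phi W_3$ must be treated separately, because a short linear-algebra check shows that $b_1+b_2$ is the \emph{only} primitive class killed simultaneously by $W_1$ and $W_2$, and it does \emph{not} die in $\pi_1(\Sigma_2)/N(W_1^\phi)$. Here I would take $W'=W_1^\phi$ and $W''=W_3$. Both quotients are again abelian: $\pi_1(\Sigma_2)/N(W_1^\phi)\cong\Z^2$ (isomorphic to $\pi_1(\Sigma_2)/N(W_1)$ via the automorphism induced by $\phi$) and $\pi_1(\Sigma_2)/N(W_3)\cong\Z_2$, so a homological computation again suffices. Using the conjugated relations~$(\ref{conjW1relations})$ together with the relations~$(\ref{eqn:prestn11})$ and $(\ref{eqn:prestn12})$ defining $\pi_1(X_3)$, one verifies that the primitive class $[\beta]=a_1-3b_1+2a_2$ dies in $\pi_1(\Sigma_2)/N(W_1^\phi)$ and has even $a_2$-coefficient, hence dies in $\Z_2$; the corresponding nonseparating curve then feeds into Lemma~\ref{infinite}.

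The genuinely delicate point is not the invocation of the lemma but the bookkeeping in the last paragraph: one must confirm that \emph{every} relevant quotient $\pi_1(\Sigma_2)/N'$, $\pi_1(\Sigma_2)/N''$ is abelian (so that the Picard--Lefschetz homological criterion legitimately detects group-theoretic triviality) and then exhibit a primitive --- hence simple-closed-curve-realizable --- homology class vanishing in both. For $\widehat{W}_5$ this forces one to abandon the uniform curve $b_1+b_2$ and check an asymmetric class by hand. Once that is done, the existence of the dual curve meeting $\beta$ once and the production of the infinite families are supplied verbatim by the proof of Lemma~\ref{infinite}.
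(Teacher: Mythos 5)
Your proposal is correct, and for seven of the eight cases it is essentially the paper's argument in different clothing: in both, one factor $W'$ is chosen so that $\pi_1(\Sigma_2)/N'$ is already trivial, and the only remaining work is to kill $\beta$ in the quotient by the trailing block $W_1$ or $W_2$. The paper simply takes $\beta$ to be one of the vanishing cycles $x_i$ or $B_i$ of that trailing factor, which is trivial in $\pi_1(\Sigma_2)/N''$ by fiat; you instead pass to the abelian quotient and realize the primitive class $b_1+b_2$ by a nonseparating simple closed curve. Both work, but the paper's choice avoids the (easy, yet extra) justifications that the quotient is abelian and that a primitive class is represented by a simple closed curve. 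The genuine divergence is the case $\widehat{W}_5=W_1^{\phi}W_3$, where neither factor alone has trivial quotient. The paper resolves this by \emph{changing the construction}: it replaces $\phi=t_{c_1}^{-1}t_{c_4}$ by the simpler $\phi=t_{c_4}$, reverifies that $\widehat{X}_5$ is still simply connected, and then observes that $x_3$, being disjoint from $c_4$, is a common vanishing cycle of $W_1^{\phi}$ and $W_3$, so $\beta=x_3$ works on the nose. You keep the original $\phi$ and instead locate, by linear algebra in the two abelian quotients $\Z^2$ and $\Z_2$, a primitive class lying in both kernels; your class checks out, since $a_1-3b_1+2a_2=-4(b_1+a_2+b_2)+(a_1+b_1+6a_2+4b_2)$ lies in the span of the abelianized relations of $W_1^{\phi}$ and maps to $2a_2=0$ in $\Z_2=\langle a_2\rangle$. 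What each approach buys: the paper's needs no homology-realization step but must redo the $\pi_1(\widehat{X}_5)$ computation for the new conjugation, whereas yours leaves the fibrations of Theorem~\ref{exotic3CP} untouched at the cost of the abelianness and primitivity bookkeeping you rightly flag as the delicate point.
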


\begin{proof}
We claim that the desired $\beta$ is found in abundance in each positive factorization $\widehat{W}_i$, which clearly already satisfies all the other assumptions in Lemma~\ref{infinite}. 

In $\widehat{W}_1$, $\widehat{W}_3$, $\widehat{W}_6$, $\widehat{W}_7$, we can take $\beta$ to be any $x_i$ appearing in the right most $W_1$ factor. Similarly in $\widehat{W}_2$, $\widehat{W}_4$, $\widehat{W}_8$, any $B_i$ in the right most $W_2$ factor would do. Lastly, by taking an even simpler conjugation, $\phi= t_{c_4}$, we can make it easier to spot the desired $\beta$ in $\widehat{W}_5$. Let us first verify that this still results in a simply-connected $\widehat{X}_5$. The relations induced by the vanishing cycles coming from the nonconjugated word $W_3$ already give a presentation of the cyclic group $\Z_2$ generated by $a_2$. The 
group $\pi_1(\widehat{X}_5)$ is calculated by adding further relations coming from the conjugated word $W_1^{\phi}$. So all we need to see is that the latter kills $a_2$. For $\phi= t_{c_4}$, the abelianized relation $b_1 + a_2 +b_2 =0$ coming from $W_1^{\phi}$, together with the $b_1+ b_2=0$ relation coming from $W_1$, implies that $a_2=0$. Thus $\pi_1(\widehat{X}_5)=1$. Since $x_3$ is disjoint from $c_4$, we have $t_{\phi(x_3)}=t_{x_3}$ both in $W_1^{\phi}$ and in $W_1$. So we can take $\beta= x_3$.
\end{proof}

\smallskip
\subsection{Even smaller, but not fibered, exotic $4$-manifolds} \label{evensmaller} \
%=============================================================================

Relying on our explicit construction of the smallest Lefschetz fibration $(X_1, f_1)$, we can construct minimal symplectic $4$-manifolds in the homeomorphism classes of  $\CP \# p \CPb$ and $3 \CP \#q \CPb$ for even smaller $p$ and $q$. As dictated by Theorem~\ref{scgeography}, this can only be achieved after giving up on the fibered aspect of our previous constructions of course. Below, we will content ourselves with demonstrating our ideas just for a couple examples, namely for $p=4$ and $q=6$, the smallest values we are able to strike. 

We will obtain the desired simply-connected minimal symplectic $4$-manifolds by generalizing our earlier constructions in two ways: first, we will build small Lefschetz fibrations \emph{over $\Sigma_h$} with $h>0$; and second, we will perform Luttinger surgeries along fibered Lagrangian tori which no longer need to preserve the fibration structure.

\smallskip
The notion of a Lefschetz fibration, along with all the associated definitions, can be easily extended to maps $f: X \to \Sigma_h$ for any $h \geq 0$. In this case, a positive factorization also contains a product of $h$ commutators $[\mathcal{A}_i, \mathcal{B}_i]$ in $\Gamma_g^m$, that is:
\begin{equation} \label{factorizationcommutators}
[\mathcal{A}_1, \mathcal{B}_1]\cdots [\mathcal{A}_h, \mathcal{B}_h]\cdot t_{c_l} \cdots t_{c_2} t_{c_1} = t_{\delta_1}^{k_1} \cdots t_{\delta_m}^{k_m} ,
\end{equation}
which prescribes a \emph{symplectic} genus-$g$ Lefschetz fibration $(X,f)$ \emph{over $\Sigma_h$} when $g \geq 2$ \cite{GompfStipsicz, BaykurKorkmazMonden}.  The Euler characteristic of $X$ is given by $\eu(X)=(2-2g)(2-2h)+l$, and when $g=2$, we have the identical signature formula $\sigma(X)= -\frac{1}{5}(3n+s)$, where $n$  and $s$ are the number of nonseparating and separating $\{c_i\}$. As shown by Stipsicz,  when $h>0$, an additional perk is that $X$ is always minimal \cite{StipsiczChern}. However in this case $X$ can never be simply-connected, since $\pi_1(X)$ surjects onto  $\pi_1(\Sigma_h)$.

Clearly, we can add any commutator $[\mathcal{A}_i, \mathcal{B}_i]=1$ in $\Gamma_g^m$ to a positive factorization of a non-minimal Lefschetz fibration $(X, f)$, and pass to a positive factorization of the type~$(\ref{factorizationcommutators})$ of a decomposable, minimal $(X',f')$ over a positive genus surface. It is easy to see that adding $h$ many copies of the trivial commutator, i.e. when $\mathcal{A}_i= \mathcal{B}_i =1$, we moreover have
\[ \pi_1(X') \cong \pi_1(\Sigma_h) \oplus  (\pi_1(\Sigma_g) \, / \, N(c_1, \ldots, c_l) \,) \, , \]
assuming $m>0$. 

Now, adding one trivial commutator to the smallest positive factorization $W_1$ we have constructed, we get a symplectic genus-$2$ Lefschetz fibration $(X_1', f_1')$ over $T^2$ with a section $S'$. We have $\eu(X'_1)=7$, $\sigma(X_1')=-3$, and $\pi_1(X_1') \cong \Z^4$. 

By a slight abuse of notation, we also denote by $a_1, b_1, a_2, b_2$ the curves on a subsurface $\Sigma_2^1$ of a regular fiber $F \cong \Sigma_2$ representing the standard generators of $\pi_1(\Sigma_2^1)$, but isotoped to be in minimally intersecting position. In the same fashion, we pick $a, b$ on the base $T^2$, which also denote the standard generators of $\pi_1(T^2)$, while making sure they avoid  a disk $D \subset T^2$ containing $\text{Crit}(f)$. Lastly let $a'_1$ be a parallel copy of $a_1$ on $\Sigma_2^1$ and $b'$ be a parallel copy of $b$ on $T^2 \setminus D$. We can then assume that the parallel transport of any $\alpha \in \{a_1, b_1, a_2, b_2, a'_1 \}$ over any $\gamma \in \{a, b, b' \}$ is a Lagrangian torus $T$ fibered over $\gamma$. Through the trivialization $X_1 \setminus (f^{-1}(D) \cup \nu S) \cong \Sigma_2^1 \x \Sigma_1^1$, we can view $T$ as a Lagrangian $\alpha \x \gamma$ with respect to a product symplectic form on  $\Sigma_2^1 \x \Sigma_1^1$. 

%For  $\nu(\alpha), \nu(\gamma)$ normal neighborhoods of $\alpha, \beta$ framed by normal vectors $n_{\alpha}, n_{\gamma}$, the Weinstein  neighborhood of $\alpha \x \gamma$ is determined by their products. 

We claim that performing the following four disjoint Luttinger surgeries in $X'_1$:
\[ (a_2 \x b, a_2, 1),\, (b_2 \x b', b_2, 1), \, (a_1 \x a, a, 1),\, (a'_1 \x b, b, 1),\]
we obtain a simply-connected $4$-manifold $\widetilde{X'_1}$. Here, we encode the surgery data by the triple $(T, \lambda, k)$, as in \cite{ABP, FSReverseEngineering}. Per the choices we made above, we can appeal to the meticulous work of Baldridge and Kirk in \cite{BK} to deduce that $\pi_1(\widetilde{X'_1})$ has a presentation with generators $a_i, b_i, a, b$, where the following relations hold (among many others we do not include): 
\begin{equation*}
 a_1 a_2^2b_2^2=1,  \, b_1 b_2=1, [b_2, b]=1, \\
\end{equation*}
\begin{equation*}
\mu_1 a_2 = 1,  \, \mu_2 \, b_2 =1 , \, \mu_3 \, a =1, \, \mu_4 \, b = 1 , \\
\end{equation*}

\noindent where $\mu_i$ are the meridians of the surgered Lagrangian tori, each given by some commutator of the pairs $\{b_2, a\}$, $\{a_2, a\}$, $\{b_1, b\}$ and $\{b_1, a\}$.\footnote{One can certainly determine these commutators on the nose, but there will be no need for our calculations here.} Since $b_1 b_2=1$ and $[b_2, b]=1$, it follows that $b_1$ and $b$ commute. So from $\mu_3 a=1$, we get $a=1$. For all other $\mu_i$ are commutators of $a$, the remaining relations in the last line above imply $b=a_2=b_2=1$, and in turn $a_1=b_1=1$. Thus $\pi_1(\widetilde{X'_1})=1$.

Clearly $\eu(\widetilde{X'_1}) = \eu(X'_1)$ and $\sigma(\widetilde{X'_1})= \sigma(X'_1)$. Since any exceptional sphere can be isotoped away from Lagrangian tori \cite{Welschinger}, and   for each Luttinger surgery there is an inverse Luttinger surgery, minimality of $X'_1$ implies that 
$\widetilde{X'_1}$ is minimal. Hence $(\widetilde{X'_1}$  is a minimal symplectic $4$-manifold which is an exotic $\CP \# \, 4 \CPb$.

Our construction of a minimal symplectic $3\CP \# \, 6 \CPb$ can be built on the same construction scheme: now add \emph{two} trivial commutators to the positive factorization $W_1$ to obtain a symplectic genus-$2$ Lefschetz fibration $(X''_1, f''_1)$ over $\Sigma_2$ with a section $S''$. Here we calculate $\eu(X''_1)=11$, $\sigma(X''_1)=-3$, $\pi_1(X''_1) \cong \pi_1(\Sigma_2) \oplus \Z^2$. Choosing $a_1, b_1, a_2, b_2, a'_1$ on the fibers as before, and the curves $a, b, b', A, B, B'$ in the same fashion on the base $\Sigma_2$, away from the critical values, we can define Lagrangian tori $\alpha \x \gamma$ for any $\alpha \in \{a_1, b_1, a_2, b_2, a'_1\}$ and $\gamma \in \{a, b, b', A, B, B'\}$. Now let $\widehat{X''_1}$ be the result of the following disjoint Luttinger surgeries:
\[ (a_2 \x b, a_2, 1),\, (b_2 \x b', b_2, 1), (a_1 \x a, a, 1),\, (a'_1 \x b, b, 1), \, (a_1 \x A, A, 1),\, (a'_1 \x B, B, 1). \]
Then we get a presentation for $\pi_1(\widehat{X''_1})$ with generators $a_i, b_i, a, b, A, B$, where the following relations hold (again, among many others we will not need): 
\begin{equation*}
 a_1 a_2^2b_2^2=1,  \, b_1 b_2=1, [b_2, b]=1, [b_2, B]=1 , \\
\end{equation*}
\begin{equation*}
\mu_1 a_2 = 1,  \, \mu_2 \, b_2 =1 , \, \mu_3 \, a =1, \, \mu_4 \, b = 1 , 
\mu_5 A = 1,  \, \mu_6 \, B =1 , \\
\end{equation*}

\noindent where the additional meridians $\mu_5, \mu_6$ are some commutators of the pairs $\{b_1, B\}$ and $\{b_1, A\}$. By arguments identical to those we had before (now repeating them also for $A$ and $B$), we observe that these relations suffice to kill all the generators. We thus have a simply-connected minimal symplectic $\widehat{X''_1}$, which is an exotic $3 \CP \# 6 \CPb$.

\smallskip
We summarize our results in the following:

\begin{theorem} \label{thm:evensmaller}
Luttinger surgeries along fibered Lagrangian tori in the symplectic genus-$2$ Lefschetz fibrations $(X'_1, f'_1)$ over $T^2$ and $(X''_1, f''_1)$ over $\Sigma_2$ yield minimal symplectic $4$-manifolds $\widetilde{X'_1}$ and $\widehat{X''_1}$ homeomorphic to $\CP \# 4 \CPb$ and $3 \CP \# 6 \CPb$, respectively. 
\end{theorem}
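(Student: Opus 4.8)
The plan is to build the two exotic manifolds exactly as the theorem describes: start from the smallest genus-$2$ Lefschetz fibration $(X_1,f_1)$ given by $W_1$, enlarge the base by adding trivial commutators, and then knock out the fundamental group by a carefully chosen collection of Luttinger surgeries along fibered Lagrangian tori, all while keeping minimality intact. First I would add one trivial commutator $[\mathcal{A}_1,\mathcal{B}_1]=1$ (with $\mathcal{A}_1=\mathcal{B}_1=1$) to $W_1$, producing a symplectic genus-$2$ Lefschetz fibration $(X'_1,f'_1)$ over $T^2$. Using the formula $\eu(X)=(2-2g)(2-2h)+l$ with $g=2$, $h=1$, $l=7$, I get $\eu(X'_1)=7$, and by the signature formula $\sigma(X'_1)=-\tfrac{1}{5}(3n+s)=-3$; the added trivial commutator contributes a $\pi_1(T^2)=\Z^2$ factor, so $\pi_1(X'_1)\cong \pi_1(\Sigma_2)/N(\text{vanishing cycles})\oplus \Z^2\cong \Z^4$, using the computation in Section~\ref{smallLFs} that $\pi_1$ from $W_1$ is the rank-$2$ abelian group on $a_2,b_2$.

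The heart of the construction is the choice of four disjoint fibered Lagrangian tori and surgery data that kill $\pi_1$. I would pick the curves $a_1,b_1,a_2,b_2$ (and a parallel copy $a_1'$) on a fiber subsurface $\Sigma_2^1$ and $a,b$ (with a parallel copy $b'$) on the base $T^2\setminus D$, arranged so that each parallel transport $\alpha\times\gamma$ is an embedded Lagrangian torus in the product region $\Sigma_2^1\times\Sigma_1^1$. Performing the surgeries $(a_2\times b,a_2,1)$, $(b_2\times b',b_2,1)$, $(a_1\times a,a,1)$, $(a_1'\times b,b,1)$ and invoking the Baldridge--Kirk presentation machinery from \cite{BK}, I would extract the essential relations $\mu_1 a_2=\mu_2 b_2=\mu_3 a=\mu_4 b=1$ together with $a_1 a_2^2 b_2^2=1$, $b_1 b_2=1$, $[b_2,b]=1$, where each $\mu_i$ is a commutator of a pair drawn from $\{b_2,a\},\{a_2,a\},\{b_1,b\},\{b_1,a\}$. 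The key cascade is: $b_1 b_2=1$ and $[b_2,b]=1$ force $[b_1,b]=1$, so $\mu_3$ (a commutator of $a$) gives $a=1$; then all the remaining $\mu_i$ being commutators involving $a$ collapse, yielding $b=a_2=b_2=1$ and hence $a_1=b_1=1$, so $\pi_1(\widetilde{X'_1})=1$.

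Minimality and the homeomorphism type then follow cleanly. Since Luttinger surgery preserves $\eu$ and $\sigma$, I have $\eu(\widetilde{X'_1})=7$ and $\sigma(\widetilde{X'_1})=-3$, so $b^+=1$, $b^-=4$; being simply-connected and nonspin (it carries a reducible fiber, hence an odd intersection), Freedman's theorem identifies $\widetilde{X'_1}$ as homeomorphic to $\CP\#4\CPb$. For minimality I would use that $X'_1$ is minimal (it lies over a positive-genus base, so Stipsicz's result applies), that exceptional spheres can be isotoped off the Lagrangian tori \cite{Welschinger}, and that each Luttinger surgery is invertible; together these show $\widetilde{X'_1}$ inherits minimality. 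The $3\CP\#6\CPb$ case runs identically: add \emph{two} trivial commutators to get $(X''_1,f''_1)$ over $\Sigma_2$ with $\eu=11$, $\sigma=-3$ (so $b^+=3$), $\pi_1\cong\pi_1(\Sigma_2)\oplus\Z^2$, and perform the six surgeries listed, adjoining two more tori $a_1\times A$ and $a_1'\times B$ whose meridians $\mu_5,\mu_6$ are commutators of $\{b_1,B\}$ and $\{b_1,A\}$; the same relation-chasing (now also using $[b_2,B]=1$) kills everything.

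The main obstacle will be verifying that the listed relations genuinely follow from the Baldridge--Kirk computation --- i.e.\ confirming that the particular parallel-transport tori really do sit in the product region with the claimed meridian-to-commutator correspondence, and that the four (respectively six) tori are genuinely disjoint and Lagrangian for a common symplectic form. This is precisely the kind of delicate but routine $\pi_1$ bookkeeping that \cite{BK} was designed to handle, so I would lean on their framework rather than recompute the full presentation; the payoff is that only the handful of relations displayed above are needed, and the fundamental-group cascade that trivializes them is short and robust.
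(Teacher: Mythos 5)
Your proposal reproduces the paper's own construction essentially verbatim: the same trivial-commutator extensions of $W_1$ over $T^2$ and $\Sigma_2$, the same four (resp.\ six) fibered Lagrangian tori with identical surgery data, the same Baldridge--Kirk relations and cascade killing $\pi_1$, and the same minimality argument via Welschinger's isotopy result and invertibility of Luttinger surgery. This matches the paper's proof, so there is nothing further to add.
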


\smallskip
\begin{remark}
We can easily use different genus-$2$ Lefschetz fibrations over $T^2$ and $\Sigma_2$ in our construction by adding other commutators to the positive factorization $W_1$ instead of the trivial one. For instance, if we add $[\phi, 1]= [t^{-1}_{c_1} t_{c_4}, 1]$ instead, we can 
produce a simply-connected $4$-manifold without the first two Luttinger surgeries in each case. 

On this note, recall from Section~\ref{preliminaries} that conjugating the first factors $W_1$ or $W_2$  with $\phi=t^{-1}_{c_1} t_{c_4}$  in the positive factorizations of \, $\widetilde{W}_i, \widehat{W}_i$ we had earlier amount to fibered Luttinger surgeries. Thus, several of the simply-connected minimal genus-$2$ Lefschetz fibrations $(\widetilde{X}_i, \widetilde{f}_i)$ and $(\widehat{X}_i, \widehat{f}_i)$ we have constructed earlier could be obtained from genus-$2$ Lefschetz fibrations corresponding to nonconjugated products via a pair of fibered Luttinger surgeries along $b_1 \x \gamma$ and $a_2 \x \gamma$, where $\gamma$ is a curve on the base enclosing all the critical values coming from the first factor $W_1$ or $W_2$. 
\end{remark}

\begin{remark} \label{experts}
A few words for the experts:  in a nutshell, the manifolds $\widetilde{X'_1}$ and $\widehat{X''_1}$ can be seen to be produced by first taking a symplectic sum of $(S^2 \x T^2) \, \# 3 \CPb$ and $\Sigma_2 \x T^2$, and then performing Luttinger surgeries along Lagrangian tori in the latter summand. This has been a well-exploited approach to successfully construct many small $4$-manifolds (e.g \cite{ABP, AP, BK, FSReverseEngineering}), where the true hardship had been in finding the desired symplectic surfaces with well-understood fundamental group complements, and then obtaining a manageable fundamental group presentation for the resulting $4$-manifold to argue that it is simply-connected. We believe these issues are significantly easier to handle (compare e.g. with the examples of \cite{AP}) when we use the  Lefschetz fibration $(X_1, f_1)$ as above. Similarly, describing a handle decomposition of our $\widetilde{X'_1}$ and $\widehat{X''_1}$ is a relatively easier task departing from the handle decompositions of $(X'_1, f'_1)$, $(X''_1, f''_1)$. 
\end{remark}

\vspace{0.2in}
%==============================================================================

\end{document}